\newtheorem{theorem}{Theorem}[section]
\newtheorem{lemma}[theorem]{Lemma}
\theoremstyle{definition}
\newtheorem{definition}[theorem]{Definition}
\newtheorem{example}[theorem]{Example}
\newtheorem*{rem}{Remark}
\DeclareMathOperator{\suppo}{supp}
\DeclareMathOperator{\cod}{codim}
\DeclareMathOperator{\ima}{Im}
\DeclareMathOperator{\Hess}{Hess}
\DeclareMathOperator{\He}{H}
\DeclareMathOperator{\Te}{T}
\DeclareMathOperator{\conv}{conv}
\DeclareMathOperator{\mv}{MV}
\DeclareMathOperator{\res}{Res}
\theoremstyle{remark}
\numberwithin{equation}{section}
\begin{document}

\title{Bivariate systems of polynomial equations with roots of high multiplicity.
}


\author{I. Nikitin 
}

\footnotetext{
              National Research University Higher School of Economics, Russian Federation, Department of Mathematics, 6 Usacheva st, Moscow 119048; \\
              \email{i.s.nikitin@yandex.ru (more preferable), isnikitin@edu.hse.ru}           
}


\maketitle

\begin{abstract}
Given a bivariate system of polynomial equations with fixed support sets $A, B$ it is natural to ask which multiplicities its solutions can have. We prove that there exists a system with a solution of multiplicity $i$ for all $i$ in the range $\{0,1,...,|A|-|\conv(A)\ominus B|-1\}$, where $A\ominus B$ is the set of all integral vectors that shift B to a subset of $A$. As an application, we classify all pairs $(A, B)$ such that the system supported at $(A, B)$ does not have a solution of multiplicity higher than $2$.
\\

Keywords$\colon$Differential geometry, Algebraic geometry, Weierstrass points, Newton Polytope
\end{abstract}

\section{Introduction}
\label{intro}
For any projective algebraic variety $X\subset\mathbb{CP}^n$ one can define the dual variety $X^{*}\subset(\mathbb{CP}^n)^{*}$ as the set of hyperplanes $H$ tangent to the smooth part $X_{sm}$ of X, i.e. such that $H\cap X_{sm}$ is not smooth. Typically $X^{*}$ is a hypersurface, and if not, then $X$ is called dual defective. The study of all exceptional defective cases when $\cod X^{*}\geq1$ is a classical problem \cite{E}.

The notion of the dual variety is used in the study of discriminants. Consider a finite subset $A$ in the monomial lattice $\mathbb{Z}^{n}$ and the set $\mathbb{C}^{A}$ of Laurent polynomials whose monomials belong to $A$. The computation of the discriminant of a polynomial with the support $A$, i.e. the set $\triangle_{A}=\{f\in\mathbb{C}^{A}|\ df=f=0\ \text{has a solution}\}$ is strongly related to the aforementioned problem about dual varieties through the following almost tautological observation. The discriminant variety $\triangle_{A}=0$ is projectively dual to the toric variety $X_{A}$ \cite{GKZ} (p.3), that is defined as follows$\colon X_A$ is the closure of the image of $(\mathbb{C}^{*})^n$ by the so-called monomial map $m_{A}$ 
$$
m_{A}\colon(\mathbb{C}^{*})^{n}\rightarrow\mathbb{CP}^{A}
$$
$$
x\mapsto [\ldots\colon x^{a_{i}} \colon\ldots].
$$

We may consider subsets of $(\mathbb{CP}^{n})^*$ consisting of hyperplanes $H$ such that $H\cap X_{sm}$ has prescribed singularities. In that case, the  dual projective  space  admits a natural  stratification where  every stratum corresponds to a collection of singularities of specific type. For $X_{A}$ the stratum with the lowest positive codimension is $\triangle_{A}$. The problem of the classification of dual defective $X_{A}$ has been recently solved in \cite{AET} and \cite{KA}. Some further strata of the singularity stratification are studied e.g. in \cite{DRR} and \cite{AET}. 

In more general setting, if the variety in $(\mathbb{C}^{*})^n$ is given by a system of Laurent polynomials
$$
f_{1}=f_{2}=\ldots=f_{d}=0
$$
with given support sets $A=(A_{1},\ldots, A_{d}),\ A_{i}\subset\mathbb{Z}^{n}$, then we define the mixed $A$-discriminant as the algebraic closure in $C=\mathbb{C}^{A_1}\oplus\ldots\oplus\mathbb{C}^{A_d}$ of the set of all systems that have a degenerate root, i.e. a point such that $f_{i}(p)=0$, $i=0,\ldots, d$ and $df_{i}$ are linearly dependent. In the same manner denote by $C_{k}$ the stratum of all systems that have a root of Milnor number $k-1$ (e.g. if $d=n$, $C_{k}$ is the set of all systems that have a root of multiplicity $k$). Natural problems that arise here are$\colon$
\begin{itemize}
    \item Classify all $A$ for which $C_{k}$ is empty for some low $k$.
    \item What properties should $A$ satisfy to make $C_{k}$ of expected codimension?
    \item Estimate the length of maximal possible chain of non empty strata $$C_{k}, C_{k-1}, \ldots, C_{0}.$$
\end{itemize}   For $k=2, n=d$ these problems were solved in \cite{BN} and \cite{EA}, which lead to new results on the Galois groups of general systems of polynomial equations. From the results of the work \cite{GK} it follows that the maximal possible $k$ such that $C_k$ is non empty can be bounded from above by a function depending on the total number of monomials in $A$ but not on their degrees. Also, in the recent work \cite{DRM} it was shown that the only smooth polygons $A$ such that curves $\{f=0\},\, f \in \mathbb{C}^A$, may have at most $A_1$-singularities, are equivalent to the 2–simplex or a unit square. In this paper we study the problem about the continuous chain of strata in the case $n=d=2$ and arbitrary $k$.

Let us formulate the main result of the paper. Let $A, B$ be finite subsets of $\mathbb{Z}^2$ such that $A, B$ cannot be shifted to the same proper sublattice and such that neither $A$ nor $B$ lies on a segment. In this paper we show that systems supported at $(A, B)$ can have roots of every multiplicity from $1$ to $|A|-|\conv(A)\ominus B|-1$,
where $\conv(A)$ is the convex hull of $A$ and $X\ominus Y$ is the set of all integral vectors that shift $Y$ to a subset of $X$.

In order to demonstrate the main idea that we will use in the proof of the main result, we are going to discuss the following classical theorem$\colon$
\begin{theorem}\label{ts}
Let $P=c_{1}t^{a_{1}}+c_{2}t^{a_{2}}+\ldots+c_{k}t^{a_{k}}$ be a $k$-sparse polynomial in a single complex variable. Then for any $l\in\{0, 1,\ldots, k-1\}$ there exists a set of coefficients $\{c_{1}, c_{2},\ldots, c_{k}\}$ such that $P(t)$ has a non-zero root of multiplicity $l$.
\end{theorem}

The proof is based on a simple linear-algebraic argument. We will use the same idea to obtain the main result of the paper so let us give the proof.

\begin{proof}[\underline{Proof of Theorem \ref{ts}}]
Let $P(t)=c_{1}t^{a_{1}}+c_{2}t^{a_{2}}+\ldots+c_{k}t^{a_{k}}$. We show that there exists a set of coefficients such that $t=1$ is a root of desirable multiplicity $l\in 0, 1,\ldots, k-1$. To avoid zero terms we multiply $P(t)$ by the monomial $t^k$. Since $P(t)$ is a polynomial in one variable the condition ``$t=1$ is a root of multiplicity $k-1$" is equivalent to existence of a non-zero solution of the following system of linear equations$\colon$
\[
P(1)=P'(1)=P''(1)=\ldots=P^{(i)}(1)=\ldots=P^{(k-2)}(1)=0,
\]
or in matrix notation, 
\[
\begin{bmatrix}
1 & 1 & 1 & \dots & 1 & 1\\
a_{1} & a_{2} & a_{3} & \dots & a_{k-1} & a_{k} \\
\dots  & \dots  & \dots  & \dots & \dots & \ldots \\
\frac{a_{1}!}{(a_{1}-k+2)!} & \frac{a_{2}!}{(a_{2}-k+2)!} & \frac{a_{3}!}{(a_{3}-k+2)!} & \dots & \frac{a_{k-1}!}{(a_{k-1}-k+2)!} & \frac{a_{k}!}{(a_{k}-k+2)!}
\end{bmatrix}
\begin{bmatrix}
c_{1} \\ c_{2} \\ \dots \\ c_{k} 
\end{bmatrix}
=0.
\]

Applying elementary transformations to the system, we get the following$\colon$

\[
\begin{bmatrix}
1 & 1 & 1 & \dots & 1 & 1\\
a_{1} & a_{2} & a_{3} & \dots & a_{k-1} & a_{k}\\
a^2_{1} & a^2_{2} & a^2_{3} & \dots & a^2_{k-1} & a^{2}_{k}\\
\dots  & \dots  & \dots  & \dots & \dots & \ldots  \\
a^{k-2}_{1} & a^{k-2}_{2} & a^{k-2}_{3} & \dots & a^{k-2}_{k-1} & a^{k-2}_{k} 
\end{bmatrix}
\begin{bmatrix}
c_{1} \\ c_{2} \\ \dots\\ \dots \\ c_{k} 
\end{bmatrix}=0.
\]
The maximal minor of this matrix is the Vandermonde determinant, and since all $a_{k}$ are distinct, the matrix has maximal rank. Since the matrix has maximal rank, solutions of the first $l$ rows is a vector space of dimension $k-l$. Then, the vector space corresponding to solutions of the first $l+1$ rows is a proper subspace of the solutions corresponding to the first $l$ rows. Now to obtain a set of coefficients for which $t=1$ is a root of multiplicity $l$, we can take any solution of the first $l$ equations of the matrix which does not satisfy equations number $l+1,\ldots, k-1$. 
\end{proof}
\begin{rem}
Also, from this proof it follows that for any given set $A\subset\mathbb{Z}$, such that $|A|=k$, we cannot get a root of multiplicity $k$ in the complex torus $\mathbb{C}^{*}$ for any set of coefficients. Assume that such set of coefficients exists, then we have a square system of linear equations whose coefficient matrix has maximal rank and we conclude that this system has only the trivial solution.
\end{rem}

For further discussions, let us introduce some notation. Recall that for each $a=(a_{1}, a_{2})\in\mathbb{Z}^{2}$ we use the expression $z^{a}$ to denote the monomial $z_{1}^{a_{1}}z_{2}^{a_{2}}$, where $z_{i}\in\mathbb{C}^{*}$ and for each $A\subset\mathbb{Z}^{2}$ we denote by $\mathbb{C}^{A}$ the space of Laurent polynomials $\sum_{a\in A}c_{a}z^a$ supported at $A$. For any polynomial $f$ the convex hull of its support is called the Newton polygon of $f$. For a pair of two subsets $S=(A_{1},  A_{2})\subset\mathbb{Z}^{2}$, we have $\mathbb{C}^{S}=\mathbb{C}^{A_{1}}\oplus\mathbb{C}^{A_{2}}$. We identify each  $f=(f_{1}, f_{2})\in\mathbb{C}^{S}$ with the corresponding system of polynomial equations $f_{1}=f_{2}=0$. For any two subsets $A, B$ of\ $\mathbb{Z}^{2}$ their Minkowski sum $A+B$ is defined as follows$\colon A+B=\{a+b|a\in A, b\in B\}\subset\mathbb{Z}^{2}$. We say that a finite subset $A\subset\mathbb{Z}^2$ is convex if it can be represented in the form $A=\mathbb{Z}^2\cap\bar{A}$, for some convex $\bar{A}\subset\mathbb{R}^{2}$. 

By $\mathbb{CP}^{A}$ we denote the projective space whose homogeneous coordinates are enumerated by $A$. We denote the zero set of an ideal or a set of polynomials by $\mathcal{Z}(\ldots)$. We say that a curve $\gamma$ is supported at $A\subset\mathbb{Z}^2$ if there exists $f\in\mathbb{C}^A$ such that $\gamma=\mathcal{Z}(f)$. The number of points in the set $A$ is denoted by $|A|$. The mixed volume of two convex polygons $A, B$ is denoted by $\mv(A, B)$.

Given a pair of subsets $S=(A_{1}, A_{2})$, we consider the subset $M_{S}$ of natural numbers consisting of multiplicities that a root of a system supported at $S$ may have. More precisely$\colon$
$$
M_{S}=\{m\in\mathbb{N}\mid \text{there exists a system}\ f\in\mathbb{C}^{S}\ \text{with a root of multiplicity}\ m\}.
$$
\begin{figure}[h!]
  \centering
     \includegraphics[width=0.6\linewidth]{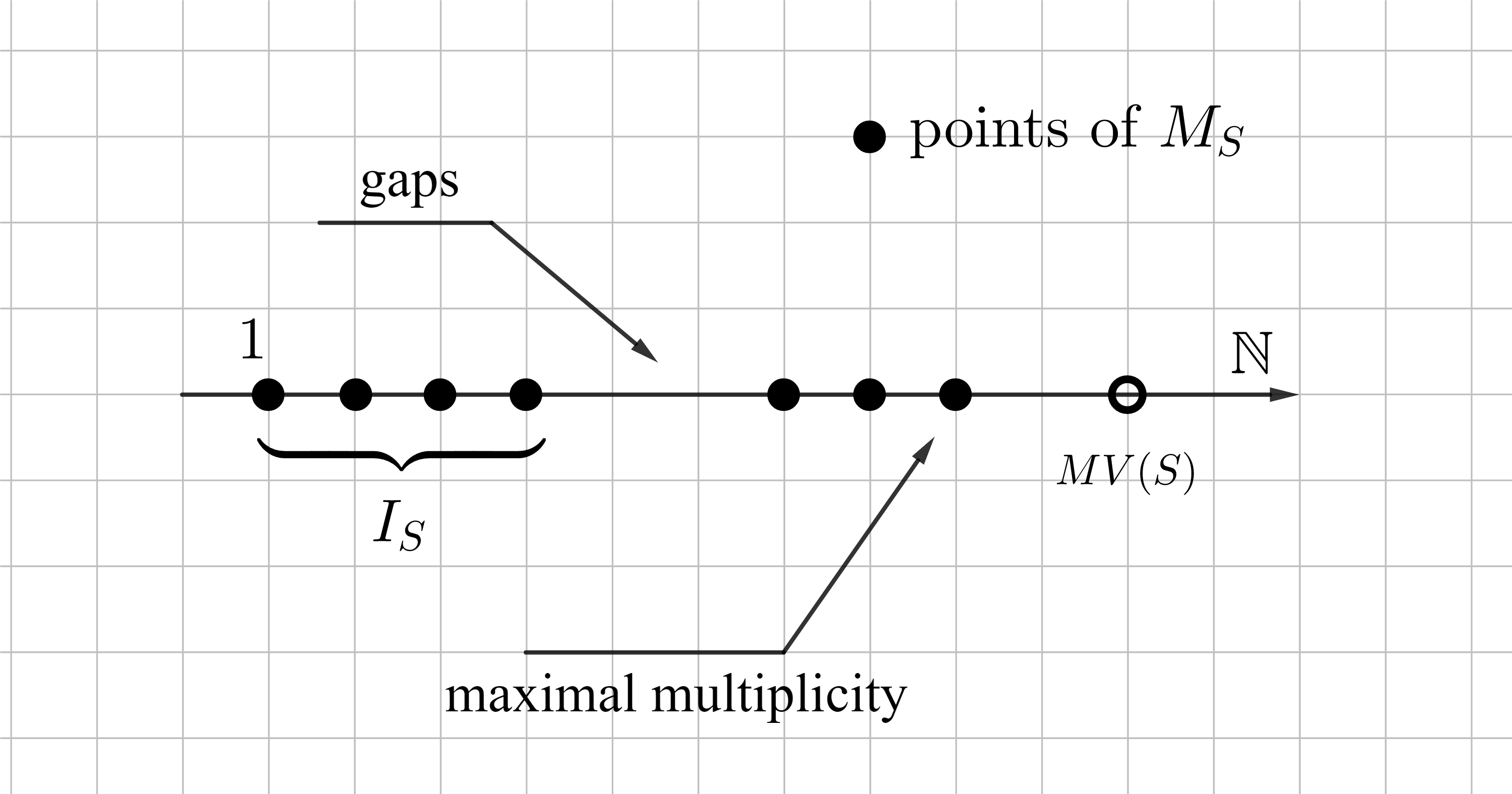}
    \caption{Description of $M_{S}$}
\label{y}
\end{figure}

The simplest natural problems that arise here are (see Figure \ref{y})$\colon$
\begin{enumerate}
\item Estimate $|I_{S}|$, where $I_{S}\subset M_{S}$ is the maximal set of consecutive integers in $M_{S}$ that is starting at 1.  
\item Find the maximal possible multiplicity $\max(M_{S})$.
\item Find integers which are less or equal than the mixed volume of $S$ but do not lie in $M_{S}$, i.e. find all the multiplicities that the system supported at $S$ cannot have.
\end{enumerate}

In this paper we focus on the first problem and give examples which show that the answers to (2) and (3) are nontrivial. The main result is$\colon$
\begin{theorem}\label{th}
Let $A, B\subset\mathbb{Z}^{2}$ such that $A$ and $B$ cannot be shifted to the same proper sublattice and neither $A$ nor $B$ lies on a segment. Then for almost all points on a generic curve $\gamma$ supported at $B$, there exists a set of osculating curves $g_{i}$ supported at $A$, such that $g_{i}$ intersects $\gamma$ with multiplicity $i$ for any $i\leq|A|-|\conv(A)\ominus B|-1.$
\end{theorem}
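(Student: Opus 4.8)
The plan is to reduce the theorem, exactly as in the proof of Theorem~\ref{ts}, to the non-vanishing of a single determinant; the Vandermonde of Theorem~\ref{ts} will be replaced by a Wronskian, and this is precisely the point at which characteristic zero is used.

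Fix a generic $f\in\mathbb{C}^{B}$ and set $\gamma=\mathcal{Z}(f)\subset(\mathbb{C}^{*})^{2}$. Since $B$ does not lie on a segment, a generic member of $\mathbb{C}^{B}$ is reduced, irreducible and smooth inside the torus, so the ideal of $\gamma$ in $\mathbb{C}[z_{1}^{\pm1},z_{2}^{\pm1}]$ is the principal ideal $(f)$. The first real step is the estimate $\dim\ker\rho\le|\conv(A)\ominus B|$ for the restriction map $\rho\colon\mathbb{C}^{A}\to\mathbb{C}[z_{1}^{\pm1},z_{2}^{\pm1}]/(f)$, $g\mapsto g|_{\gamma}$: if $g|_{\gamma}\equiv0$ then $f\mid g$, say $g=fh$, and because the Newton polygon of a product is the Minkowski sum of the Newton polygons of the factors, while $\operatorname{Newt}(f)=\conv(B)$ and $\operatorname{Newt}(g)\subseteq\conv(A)$, we obtain $\operatorname{Newt}(h)+\conv(B)\subseteq\conv(A)$, hence $\suppo(h)\subseteq\conv(A)\ominus B$; thus $g\mapsto g/f$ embeds $\ker\rho$ into $\mathbb{C}^{\conv(A)\ominus B}$. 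Consequently the linear system $\rho(\mathbb{C}^{A})$ cut out on $\gamma$ by curves supported at $A$ has dimension at least $N:=|A|-|\conv(A)\ominus B|$.

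Next I would fix a general point $p\in\gamma$ and a local analytic coordinate $t$ at $p$ and view each $z^{a}|_{\gamma}$, $a\in A$, as an element of $\mathbb{C}\{t\}$. For $g=\sum_{a\in A}c_{a}z^{a}$, the condition that $\mathcal{Z}(g)$ meets $\gamma$ at $p$ with multiplicity at least $m$ is the linear system $\sum_{a\in A}c_{a}\,\tfrac{d^{j}}{dt^{j}}\bigl(z^{a}|_{\gamma}\bigr)\big|_{t=0}=0$, $j=0,\dots,m-1$, in the unknowns $(c_{a})$; call its $m\times|A|$ coefficient matrix $W_{m}(p)$. As in Theorem~\ref{ts}, once we know $\rk W_{N}(p)=N$ it follows that $\rk W_{j}(p)=j$ for every $j\le N$ (a subfamily of a linearly independent family of rows is linearly independent), so that for each $i\le N-1=|A|-|\conv(A)\ominus B|-1$ we have $\ker W_{i+1}(p)\subsetneq\ker W_{i}(p)$ and any $g_{i}$ taken in the difference is a curve supported at $A$ meeting $\gamma$ at $p$ with multiplicity exactly $i$. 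So the whole theorem comes down to the statement $\rk W_{N}(p)=N$ for general $p$.

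This last statement is the Wronskian analogue of the non-vanishing of the Vandermonde in Theorem~\ref{ts}. Since $\dim\rho(\mathbb{C}^{A})\ge N$, choose $a_{1},\dots,a_{N}\in A$ with $z^{a_{1}}|_{\gamma},\dots,z^{a_{N}}|_{\gamma}$ linearly independent over $\mathbb{C}$; the corresponding $N\times N$ minor of $W_{N}(\cdot)$ is exactly the Wronskian $W\bigl(z^{a_{1}}|_{\gamma},\dots,z^{a_{N}}|_{\gamma}\bigr)$, which by the classical criterion — valid in characteristic zero — is not identically zero because its arguments are linearly independent over $\mathbb{C}$. Viewed, with respect to a tangent vector field on $\gamma$ that does not vanish identically, as a regular function on $\gamma$ (its zero set is the ramification, i.e.\ Weierstrass, locus of the linear system), it vanishes only on a finite set; hence $\rk W_{N}(p)=N$ for all $p$ outside that set and the finitely many singular points of $\gamma$, which is the ``almost all points'' of the statement. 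The genuinely delicate step — the counterpart of ``all $a_{i}$ distinct'' in Theorem~\ref{ts} — is the bound $\dim\ker\rho\le|\conv(A)\ominus B|$: it rests on the irreducibility of the generic $\gamma$ and on how Newton polygons behave under multiplication, and it is here that the hypotheses that $A$ and $B$ are two-dimensional and cannot be shifted into a common proper sublattice enter — they guarantee that $\gamma$ is a genuine irreducible reduced curve with $I(\gamma)=(f)$ and that $N$ is the sharp bound; the remainder of the argument does not otherwise use them.
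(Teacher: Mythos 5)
Your proof is correct and follows essentially the same route as the paper: your bound $\dim\ker\rho\le|\conv(A)\ominus B|$ via irreducibility of the generic $f$ and Newton polygons of products is exactly Lemmas \ref{l1} and \ref{l3}, and your Wronskian full-rank argument at a general (non-Weierstrass) point is Lemma \ref{l2}. The only difference is presentational: you restrict the monomials of $A$ directly to $\gamma$ and read off multiplicities from the local parametrization, which lets you bypass the paper's final step of transporting intersection multiplicities through the monomial map $m_{A}$ to osculating hyperplanes in $\mathbb{CP}^{A}$.
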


\begin{rem}Let us discuss what happens when we relax some of the conditions of the theorem.
\begin{enumerate}
    \item Suppose that $A$ lies on a segment, and $B$ is a generic subset of $\mathbb{Z}^2$. Without loss of generality assume that the segment containing $A$ is horizontal (one can achieve that by applying a suitable monomial change of variables). Then any system supported at $S=(A, B)$ is of the form $p(x)=q(x,y)=0$. For any set of coefficients the multiplicity of a root $(x_{0}, y_{0})$ of the system $p(x)=q(x,y)=0$ is equal to $m_{1}m_{2}$, where $m_{1}$ is the multiplicity of $x_{0}$ in $p(x)=0$ and $m_{2}$ in the multiplicity of $y_{0}$ in $q(x_{0}, y)=0$. Both numbers can be estimated using Theorem \ref{ts}.
    \item Suppose that $(A, B)$ can be shifted to the same proper sublattice of $\mathbb{Z}^2$. In this situation there exists a new pair of subsets $(A_{0}, B_{0})$ and a lattice embedding $i\colon\mathbb{Z}^2\rightarrow\mathbb{Z}^2$ such that $(A_{0}, B_{0})$ cannot be shifted to the same proper sublattice and $A=i(A_{0}), B=i(B_{0})$. Since the dual covering of complex tori $i^{*}$ preserves multiplicities we have the similar range $|I_{A, B}|\geq|A_{0}|-|\conv(A_{0})\ominus B_{0}|-1$.
\end{enumerate}
\end{rem}
\begin{rem}
Note that applying the theorem to $(A, B)$ and $(B, A)$ may give different estimates. In particular, if $|A|\leq|B|$, then we get the following inequality $$|I_{A,B}|\geq\begin{cases}
|A|-2\ \text{if}\ A=B\\
|A|-1\ \text{if}\ A\neq B.
\end{cases}$$\end{rem} In our notation we have the following chain of inequalities$\colon$
$$
|A|-|\conv(A)\ominus B|-1\leq|I_{S}|\leq\max(M_{S})\leq\mv(S).
$$

Each of these inequalities may be strict for some $S$ or may have an equality case, below we give examples to demonstrate this. Examples \ref{ex3} and \ref{ex10} are given for the first and second inequality. The last inequality also can be strict and it follows from the mentioned result in \cite{GK}. We also give an example \ref{exim} based on the result from Section \ref{s1}. We say that $S$ is {\it left-degenerate} if $$\max(|A|-|\conv(A)\ominus B|-1, |B|-|\conv(B)\ominus A|-1)=|I_{S}|$$ and {\it right-degenerate} if $|I_{S}|=\max(M_{S})$. 
The proof of the main result is given in Section \ref{thr12}. We mentioned that the result may be considered as a generalisation of Theorem \ref{ts}. Indeed, set $B=(0,1)\cup\{(a_{i}, 0)\}_{i=1}^{k}$ where $a_{i}$ are distinct positive integers and $A=\{(0, 0), (1, 0), (0, 1)\}$. Then $A$ does not lie on any segment and by the formula given in Theorem \ref{th} we get that the upper bound $|I_{S}|\geq|B|-|B\ominus A|-1=k+1-1-1=k-1$, and it is the same multiplicity that was given via Theorem \ref{ts}.

As an application of the above results we obtained the following theorem$\colon$
\begin{theorem}\label{sing}
A system supported at a pair $(A,B)$ of Newton polygons can not have a root of multiplicity 3 if and only if $\mv(A, B)\leq2$, i.e. if a system supported at $(A, B)$ can have at least $3$ roots we can obtain a root of multiplicity $3$.
\end{theorem}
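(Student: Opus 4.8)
The plan is to prove the two implications separately; the first is immediate and the second is the substance. Suppose first that $\mv(A,B)\le 2$. By the Bernstein--Kushnirenko estimate, the local intersection multiplicity of any isolated solution in $(\mathbb{C}^{*})^{2}$ of a system supported at $(A,B)$ is at most $\mv(A,B)\le 2$, and a root of multiplicity $3$ is isolated; hence no such system has a root of multiplicity $3$. This gives the ``if'' direction.

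Now assume $\mv(A,B)\ge 3$; I must produce a system supported at $(A,B)$ with a root of multiplicity exactly $3$. Since $A$ and $B$ are full-dimensional lattice polygons, each equal to the set of lattice points of its convex hull, a triple of points of $A$ spanning a triangle of least positive area spans an empty, hence unimodular, triangle, so $A$ (and likewise $B$) affinely generates $\mathbb{Z}^{2}$; in particular neither lies on a segment, and they have no common proper sublattice, so Theorem~\ref{th} applies to the ordered pairs $(A,B)$ and $(B,A)$. If
$$\max\bigl(\,|A|-|\conv(A)\ominus B|-1,\ \ |B|-|\conv(B)\ominus A|-1\,\bigr)\ \ge\ 3,$$
then Theorem~\ref{th} directly furnishes, for a generic curve of one of the two supports, an osculating curve of the other support meeting it at a torus point with multiplicity exactly $3$, and we are done.

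So assume both numbers above are $\le 2$, i.e. $|\conv(A)\ominus B|\ge|A|-3$ and $|\conv(B)\ominus A|\ge|B|-3$. If $|A|\ge4$ and $|B|\ge4$, then $\conv(B)$ translates inside $\conv(A)$ and $\conv(A)$ translates inside $\conv(B)$; two convex bodies each translating into the other are translates of one another, so $A$ and $B$ are lattice translates, whence $\conv(A)\ominus B$ is a single vector and $|A|-2\le 2$, forcing $|A|=|B|=4$; by Pick's formula $\conv(A)$ is then a lattice triangle of area $\tfrac32$ and $\mv(A,B)=3$. Thus only two configurations remain: (i) $\min(|A|,|B|)=3$, and (ii) $A=B$ a lattice triangle with four lattice points. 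In case (i), after a monomial change of coordinates the smaller support, say $A$, is the standard simplex, so the curves supported at $A$ are the lines; a curve supported at $B$ has projective degree at least $\mv(A,B)\ge 3$, hence (being generically reduced) carries flexes, and for generic coefficients a flex lies in $(\mathbb{C}^{*})^{2}$ with flex tangent an honest line, which meets the curve there with multiplicity exactly $3$. Case (ii) reduces to (i): choosing two curves supported at the area-$\tfrac32$ triangle with a common coefficient at one vertex, their difference is supported at an empty triangle, so a common zero of the two curves is an intersection point of a plane cubic with a line, and the same flex construction applies.

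The step I expect to be the main obstacle is precisely this last one. The cubics occurring in cases (i) and (ii) are forced to be singular, or tangent to the toric boundary, at infinity, so one cannot simply quote ``generic curves of degree $\ge 3$ have flexes in general position''; instead one must check, for the finitely many small polygon shapes that survive to this stage, that their flexes are not all absorbed into the coordinate axes and the line at infinity, and that the corresponding flex tangents are curves of the prescribed support. This is a concrete case-by-case verification, and it is where the genuine work lies.
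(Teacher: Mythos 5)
Your reduction follows the same skeleton as the paper's proof (Lemma \ref{lm2} together with Theorem \ref{th}), and the easy direction via the Bernstein--Kushnirenko bound is fine, but there are two genuine gaps. First, you restrict to full-dimensional $A,B$ without justification: the theorem (and the paper's classification, Theorem \ref{th2}) allows $A$ or $B$ to be a segment, and that case cannot be skipped. If $A$ is a horizontal segment with $h$ lattice points and the vertical projection of $B$ has $v$ lattice points, every isolated root in $(\mathbb{C}^{*})^{2}$ of $p(x)=q(x,y)=0$ has multiplicity $m_{1}m_{2}$ with $m_{1}\le h-1$ and $m_{2}\le v-1$ (by the remark after Theorem \ref{ts}), while $\mv(A,B)=(h-1)(v-1)$. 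For $h=v=3$ this gives $\mv(A,B)=4\ge3$ but the attainable multiplicities are only $1,2,4$: a root of multiplicity \emph{exactly} $3$, which is what you promise to produce, does not exist there. So the statement must be read as ``multiplicity at least $3$'' (a root of high multiplicity in the sense of Definition \ref{d1}), and even under that reading the segment case needs its own argument through Theorem \ref{ts}, which your proposal omits entirely.

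Second, the step you defer as ``a concrete case-by-case verification'' over ``finitely many small polygon shapes'' is exactly where the theorem lives, and it is not finite in the form you describe. In your case (i), the subcase $\conv(B)\ominus A=\varnothing$ leaves $B$ an arbitrary empty lattice triangle, and empty triangles form an infinite family modulo the transformations that preserve inflection points: only translations of the support and the projective monomial substitutions such as $x=uv^{-1}$, $y=v^{-1}$ may be used, since a general unimodular monomial change of variables does not preserve flexes. One therefore has to classify the flex-free triangles outright, which the paper does by the explicit Hessian computation of Section \ref{s1} (Lemma \ref{l8} and Lemma \ref{lem4}); note that nontrivial flex-free supports exist, e.g. $\{(1,0),(2,0),(0,1)\}$, so the assertion that flexes survive in the torus with admissible tangent lines is genuinely delicate and cannot be settled by genericity of plane cubics. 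The remaining bookkeeping (the set $C=B\ominus A$, the bounds $h,l\le1$, $d+h\le2$, $d+l\le2$, and the finitely many resulting shapes, including your case (ii)) is the content of the proof of Theorem \ref{th2}. As it stands, your proposal establishes the easy implication and the reduction to small configurations, but the existence of a torus inflection point (equivalently, of a root of multiplicity at least $3$) for every surviving support is asserted rather than proved.
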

 We will prove this theorem in the last section. The analogue of this theorem for multiplicity 2 is proved in \cite{EA} for any dimension. Whether theorem \ref{sing} is valid for any dimension is unknown. Note that the analogue of Theorem \ref{sing} for multiplicities higher than 3 does not hold already in dimension 2, as we see in the following example.

 \begin{example}\label{exim}
 There exists a pair of Newton polygons such that a system supported at it can not have a root of multiplicity 4 and whose mixed volume is equal to 4. Take $A=\{(0, 0), (1, 0), (0, 1)\}$ and $B=\{(0, 1), (3, 0), (4, 0)\}$. If a system supported at $(A, B)$ has a root of multiplicity $\geq3$ then a curve supported at $B$ has an inflection point. Any trinomial curve supported at $B$ has two inflection points in the origin and in the complex torus. Without loss of generality we can assume that the inflection point in the complex torus is located at $(1, 1)$. Computing the resultant of the system 
 $$
 \begin{cases}
 F=y-1-a(x-1)=0\\
 G=y-b x^3-(1-b) x^4=0,
 \end{cases}
 $$
 we obtain $R(y)=\res(F, G, y)=(x-1) (-1+a-x-x^2-x^3+bx^3)$, and it is easy to show that the system $R(1)=R'(1)=R''(1)=0$ has no solutions in $(a, b)$. Then $(1, 1)$ is a non-degenerate inflection point as well as $(0, 0)$.
 \end{example}

\begin{example}\label{ex3}For $n\geq3$ the pair $S=(A_{n}, A_{n})$ where $A_{n}$ is $\conv((0, 0), (n, 0), (0, n))$, is right non-left degenerate. We have
$$
|A_{3}|-|A_{3}\ominus A_{3}|-1<|I_{S}|=\max(M_{S})=\mv(S).
$$
This example shows that Theorem \ref{th} in general does not give us the length of $I_{S}$ but only an estimate. Indeed, according to Bezout's theorem, a system supported at $S=(A_{n}, A_{n})$ has at most $n^2$ isolated roots. Let us assume that $n\geq3$. Theorem \ref{th} gives the following estimate$\colon$
$$
|A_{n}|-2=n(n+3)/2-1\leq I_{S}\leq n^2.
$$

We show that $S$ is not left-degenerate, for example, if $n=3$ then the largest possible multiplicity is $9$ and $n(n+3)/2-1=8$, but $I_{S}=\max(M_{S})=9$. For $n>3$ we can also show that there exists a system that has a root of multiplicity $n(n+3)/2=(|A_{n}|-2)+1$. For instance, we take
$$
u_{k,l} =\prod_{1}^{n}h_{i}\in\mathbb{C}^{A_{n}},
$$
$$
v_{k,l}=\prod_{1}^{l}h_{i}^{a_{i}}+H_{k+1}\in\mathbb{C}^{A_{n}}, a_1 + a_2 + \ldots + a_l = k, l\leq k\leq n-1,
$$
where $h_{i}$ are pairwise independent linear forms and $H_{k+1}$ is a homogeneous form of degree $k+1$ that does not vanish on lines \{$h_{i}=0$\}. Polynomials $u_{k,l}$ and $v_{k,l}$ are well-defined since $l\leq k$, $k+1\leq n$. This system has a root of multiplicity $l(k+1)+(n-l)k=nk+l$ at zero. Now, for odd $n>3$ set $k=(n+3)/2$ and $l=0$, then the system $u_{k,l}=v_{k,l}=0$ has a root of multiplicity $nk+l=n(n+3)/2=(|A_{n}|-2)+1$. In a similar way, for even $n$ we set $k=n/2+1$, $l=n/2$. These computations imply that $S$ is not left-degenerate for $n>3$ and
$$
|A_{n}|-|A_{n}\ominus A_{n}|-1<|I_{S}|\leq\max(M_{S})=\mv(S).
$$
We do not discuss whether the second inequality is sharp.
\end{example}

\begin{example}\label{ex10}
Now let us show that the tuple $A=\{(0, 0), (1, 0), (0, 1), (0, 2),\ldots , (0, n)\}$, \\$B=\{(0, 0), (0, 1), (1, 0), (2, 0)\}$ is left-degenerate and is not right-degenerate for  $n=2l+1$, $l>1$. Then we have a tuple $S=(A, B)$ such that 
$$
|A|-|A\ominus B|-1=|I_{S}|<\max(M_{S})=\mv(S).
$$
Consider two plane curves $\gamma_{2}$, $\gamma_{n}$ given by the polynomials of the form$\colon$
$$
\gamma_{n}\colon c_{n}y-p_{n}(x)=0, p_{n}\in\mathbb{C}^{B};
$$
$$
\gamma_{2}\colon c_{2}x-q_{2}(y)=0, q_{2}\in\mathbb{C}^{A}.
$$
By Bernstein–Kouchnirenko theorem, such a system can have at most $2n$ isolated roots, but we cannot get a system that has a root of multiplicity $n+2$. Indeed, if one of $c_{n}$ or $c_{2}$ is zero we can glue together only even number of roots or number of roots that is less than $n+1$, if $c_{n}, c_{2}\neq0$ then without loss of generality we may assume that $(0, 0)$ is the root of multiplicity $k$. Then after substitution, we will have the equation$\colon$
$$
x=q_{2}(p_{n}(x)),
$$
which has a root of multiplicity $k$ at $0$. Therefore $q_{2}(p_{n}(x))-x=a_{k} x^{k}+h.o.t.$, $a_{k}\neq0$. We may assume that the leading coefficient of $q_{2}$ is 1. Expanding $q_{2}(p_{n}(x))-x$ we get $\colon$
$$
(a_{n}x^n+a_{n-1}x^{n-1}+\ldots+a_{1}x+a_{0})^2+a(a_{n}x^n+a_{n-1}x^{n-1}+\ldots+a_{1}x+a_{0})+b-x=0.
$$
The system $x-q_{2}(y)=y-p_{n}(x)=0$ has a root of multiplicity $k+1$ at zero if and only if the coefficients at the monomials $x^0, x^1,\ldots,x^{k}$ vanish and the $(k+1)$-th does not. Thus we have the following system$\colon$
$$
\begin{cases}
x^0\colon a_{0}^2+a a_{0}+b=0\Rightarrow b=-(a_{0}^2+a a_{0})\\
x^1\colon 2a_{1}a_{0}+a a_{1}=1\Rightarrow a_{1}=1/(a+2a_{0})\\
x^2\colon 2a_{2}a_{0}+a_{1}^2+aa_{2}=0\Rightarrow a_{2}=-1/(a+2a_{0})^3\\
\ldots\\
x^k, k\text{ is odd} \colon 2(a_{k}a_{0}+a_{k-1}a_{1}+\ldots+a_{i}a_{k-i})+aa_{k}=0\\
x^k, k\text{ is even} \colon 2(a_{k}a_{0}+a_{k-1}a_{1}+\ldots+a_{i}a_{k-i})+a_{k/2}^2+aa_{k}=0.
\end{cases}
$$
We claim that the solution is of the form $$a_{k}=(-1)^{k-1}\phi_{k}/(a+2a_{0})^{2k-1}, k=0,1,\ldots,n,\ \phi_{k}>0,$$ and the coefficient at the monomial $x^{n+1}$ never vanishes. Indeed, for $k=1$ this is true, for odd $k$ we have $a_{k}=-2(a_{k-1}a_{1}+\ldots+a_{i}a_{k-i})/(a+2a_{0})$. Consider 
$$a_{i}a_{k-i}=\frac{(-1)^{i-1}\phi_{i}(-1)^{k-i-1}\phi_{k-i}}{(a+2a_{0})^{2i-1}(a+2a_{0})^{2(k-i)-1}}=\frac{(-1)^{i-1+k-i-1}\phi_{i}\phi_{k-i}}{(a+2a_{0})^{2i-1+2(k-i)-1}}=\frac{(-1)^{k-2}}{(a+2a_{0})^{2k-2}}\phi_{i}\phi_{k-i},$$
then we have
$$a_{k}=-2\frac{(-1)^{k-2}}{(a+2a_{0})^{2k-2}}\underbrace{(\phi_{k-1}\phi_{1}+\ldots+\phi_{i}\phi_{k-i})}_{\phi_{k}}\frac{1}{(a+2a_{0})}=(-1)^{k-1}\frac{2\phi_{k}}{(a+2a_{0})^{2k-1}}.$$
Computations for even $k$ are similar. The $(n+1)$-st coefficient is given by the formula
$$
2(a_{n}a_{1}+a_{n-1}a_{2}+\ldots+a_{i}a_{n+1-i}),
$$
and using the formula $a_{k}=(-1)^{k-1}\phi_{k}/(a+2a_{0})^{2k-1}$ we show that this coefficient is not zero.
Thus we cannot obtain the root of multiplicity $n+2$ and the theorem gives us $I_{S}\geq n+2-1=n+1$, hence $S$ is left-degenerate. On the other hand, we can glue together all the solutions, hence $S$ is not right-degenerate. We conclude that the tuple is left-degenerate and not right-degenerate as desired. For example, the case $n=3$ gives us $M_{S}=\{1, 2, 3, 4, 6\}$, as can easily be computed directly, and we see that $M_{S}$ has a gap in 5.
\end{example}
\section{\bf Proof of the main result}\label{thr12}
In this section we prove the main result of the paper.
\begin{definition}
The monomial map supported at $A$ is given by the rule

$$
m_{A}\colon(\mathbb{C}^{*})^{n}\rightarrow\mathbb{CP}^{A}
$$
$$
x\mapsto [\ldots\colon x^{a_{i}} \colon\ldots].
$$
\end{definition}
The map is well defined on $(\mathbb{C}^{*})^n$. We denote $m_{A}((\mathbb{C}^{*})^n)$ by $\mathbb{T}$. Let $S=(A, B)$ be a tuple of subsets of $\mathbb{Z}^{2}$, and $g=\sum_{a\in A}c_{a}x^{a}$ be a polynomial supported at $A$. By $\pi_{g}$ we denote the hyperplane in $\mathbb{CP}^{A}$ given by the linear form $\sum_{a\in A} c_{a}z_{a}$, i.e. $\pi_{g}$ has the same coefficients as $g$ has.

\begin{lemma}\label{l1}
Let $A, B$ be subsets of $\mathbb{Z}^{2}$ such that $B$ does not lie on a segment. Then for a generic $f\in\mathbb{C}^B$ and $g\in\mathbb{C}^{A}$ the following conditions are equivalent$\colon$
\begin{enumerate}
    \item $m_{A}(\mathcal{Z}(f))$ is contained in $\pi_{g}$;
    \item $\exists c\in\mathbb{C}[x^{\pm1}, y^{\pm1}]$ such that $g=cf$.
\end{enumerate}
\end{lemma}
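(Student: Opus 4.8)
The plan is to rewrite both conditions as the single assertion that $g$ vanishes on the curve $\mathcal{Z}(f)$, and then to read off the equivalence from Hilbert's Nullstellensatz. First I would make condition (1) explicit. Writing $g=\sum_{a\in A}c_{a}x^{a}$, we have $\pi_{g}=\{[z_{a}]\in\mathbb{CP}^{A}\mid\sum_{a\in A}c_{a}z_{a}=0\}$, and for any $p\in(\mathbb{C}^{*})^{2}$ the point $m_{A}(p)=[\ldots\colon p^{a}\colon\ldots]$ is well defined, lying on $\pi_{g}$ precisely when $\sum_{a\in A}c_{a}p^{a}=g(p)=0$. Reading $\mathcal{Z}(f)$ as the zero locus of $f$ in the torus, condition (1) is therefore equivalent to $\mathcal{Z}(f)\subseteq\mathcal{Z}(g)$ inside $(\mathbb{C}^{*})^{2}$. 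In this form the implication $(2)\Rightarrow(1)$ is immediate, since a Laurent polynomial multiple of $f$ vanishes wherever $f$ does.

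For $(1)\Rightarrow(2)$ I would use the classical fact that a generic polynomial supported at a set $B$ whose convex hull is two-dimensional --- which is precisely the assumption that $B$ does not lie on a segment --- is irreducible, and in particular squarefree, in the unique factorization domain $R=\mathbb{C}[x^{\pm1},y^{\pm1}]$. Granting this, $(f)$ is a radical ideal of $R$; since $R$ is a finitely generated $\mathbb{C}$-algebra whose maximal ideals correspond to the points of $(\mathbb{C}^{*})^{2}$, Hilbert's Nullstellensatz gives that every element of $R$ vanishing on $\mathcal{Z}(f)$ lies in $\sqrt{(f)}=(f)$. Assuming (1), the polynomial $g$ vanishes on $\mathcal{Z}(f)$, hence $g=cf$ for some $c\in R$, which is condition (2).

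The only step that is not purely formal, and the one I expect to be the main obstacle, is the genericity statement invoked above. A clean route to it: the set of reducible elements of $\mathbb{C}^{B}$ is contained in the union, over the finitely many (up to translation) Minkowski decompositions $\conv(B)=P_{1}+P_{2}$ into lattice polygons each having at least two lattice points, of the images of the multiplication maps $\mathbb{C}^{P_{1}\cap\mathbb{Z}^{2}}\times\mathbb{C}^{P_{2}\cap\mathbb{Z}^{2}}\to\mathbb{C}^{\conv(B)\cap\mathbb{Z}^{2}}$; a dimension count, using that $\conv(B)$ is two-dimensional, shows each image is a proper subvariety, so a generic $f$ avoids all of them. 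For the lemma in isolation it is in fact enough that $f$ be squarefree, which is even more elementary, the non-squarefree locus being an obvious proper subvariety; irreducibility is stated because it is what makes the dimension count behind Theorem~\ref{th} transparent. Note finally that the remaining hypotheses of the lemma --- that $A$ is not a segment, and that $A$ and $B$ cannot be translated into a common proper sublattice --- are not needed for the equivalence itself; they are inherited from Theorem~\ref{th}, where they serve to control the geometry of the monomial maps $m_{A}$ and $m_{B}$.
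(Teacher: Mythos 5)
Your argument is essentially the paper's own proof: both translate condition (1) into the inclusion $\mathcal{Z}(f)\subseteq\mathcal{Z}(g)$ in the torus, apply the Nullstellensatz for Laurent polynomials, and use that a generic $f$ with two-dimensional support is irreducible -- the paper concludes via $g^{k}=cf$ plus the UFD property, while you conclude via radicality of $(f)$, an immaterial difference. The paper asserts the generic irreducibility without proof, so your extra sketch (and the observation that squarefreeness already suffices) only supplies detail beyond what the paper gives.
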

\begin{proof}[\underline{Proof}]
    Let us denote $m_{A}(\mathcal{Z}(f))$ by $F$. Assume that $F$ lies in $\pi_{g}$. Take $p\in F$, then there exists $x\in(\mathbb{C}^{*})^{2}$ such that $p=m_{A}(x)$. Since $p\in\pi_{g}$ we have $\sum c_{a}x^{a}=0$ therefore $x\in \mathcal{Z}(g)$ and $\mathcal{Z}(f)\subset \mathcal{Z}(g)\Rightarrow$ by Hilbert nullstellensatz for Laurent polynomials there exists $c\in\mathbb{C}[x^{\pm1}, y^{\pm1}]$ and $k\in\mathbb{N}$ such that $g^{k}=cf$. By assumption $B$ does not lie in any segment then, any generic $f\in\mathbb{C}^{B}$ is irreducible by \cite{Kh}. Since the ring of Laurent polynomials is UFD, $g^{k}=cf$ imply that there exists $\bar{c}\in\mathbb{C}[x^{\pm1}, y^{\pm1}]$ such that $g=\bar{c}f$. For the other implication, let $g=cf$, then the vanishing set of $f$ vanishes $g$ as well hence $\mathcal{Z}(f)\subset \mathcal{Z}(g)$. Therefore $F\subset\pi_{g}$ . 
\end{proof}

\begin{lemma}\label{l2}
Let $F$ be a projective curve that is given locally (in some neighbourhood $U$ of a generic point $p\in F$) by a family of holomorphic functions$\colon$ 
\[
f\colon U\rightarrow\mathbb{P}^n
\]
\[
z\mapsto[\ldots\colon f_{i}(z)\colon\ldots].
\]
If the dimension of the projective hull of $F$ equals $k$, then there exists a descending chain of pencils of osculating hyperplanes
$$
V^{0}\supset V^{1}\supset V^{2}\supset\ldots\supset V^{k},
$$
such that hyperplanes lying in $V^{i}$ intersect $F$ at p with multiplicity at least $i$, and $\dim V^{i}-\dim V^{i-1}=1$. In fact, $V^{i}$ contains a dense subset of hyperplanes that intersect $\gamma$ with multiplicity exactly $i$ and a proper subspace of hyperplanes that intersect $\gamma$ with multiplicity at least $i+1$ for $i<k$, and $V^{k}$ contains a unique hyperplane that intersects $\gamma$ with multiplicity $k$.
\end{lemma}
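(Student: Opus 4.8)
The plan is to translate the geometry of osculating hyperplanes into the linear algebra of orders of vanishing of one holomorphic function, exactly as in the proof of Theorem~\ref{ts}, with the Wronskian of a local frame of the linear hull of $F$ playing the role of the object that forces every dimension drop in the chain to equal $1$.

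Write the homogeneous coordinates on $\mathbb{P}^{n}$ as $[w_{0}:\cdots:w_{n}]$ and fix a generic $p\in F$, which we may assume to be a smooth point of $F$ that is not an inflection point of the family of hyperplane sections of $F$ (both conditions exclude only a proper closed subset of $F$). Let $z$ be a local coordinate centred at $z_{0}$ with $f(z_{0})=p$; after multiplying the frame by a common non-vanishing holomorphic factor we may assume $f_{0},\dots,f_{n}$ have no common zero at $z_{0}$. For a hyperplane $H=\mathcal{Z}\bigl(\sum_{j}a_{j}w_{j}\bigr)$ the intersection multiplicity of $H$ with $F$ at $p$ equals $\operatorname{ord}_{z_{0}}\sum_{j}a_{j}f_{j}(z)$, so for $i\ge0$ the set
\[
V^{i}:=\bigl\{a\in\mathbb{C}^{n+1}:\operatorname{ord}_{z_{0}}\sum_{j}a_{j}f_{j}(z)\ge i\bigr\}=\bigl\{a:\sum_{j}a_{j}f_{j}^{(m)}(z_{0})=0,\ m=0,\dots,i-1\bigr\}
\]
is a linear subspace of $\mathbb{C}^{n+1}$ cut out by $i$ linear equations, and $V^{0}\supseteq V^{1}\supseteq\cdots$. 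Identifying a linear form with the hyperplane it defines, these already yield the asserted descending chain of spaces of osculating hyperplanes, the ones in $V^{i}$ meeting $F$ at $p$ with multiplicity $\ge i$; what remains is to compute $\dim V^{i}$ and to describe the hyperplanes of multiplicity \emph{exactly} $i$.

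Here the assumption on the linear hull enters. Since $F$ spans a $\mathbb{P}^{k}$, the kernel of $a\mapsto\sum_{j}a_{j}f_{j}$ — namely the forms vanishing on $F$, hence on its hull — is $(n-k)$-dimensional, so $f_{0},\dots,f_{n}$ span a $(k+1)$-dimensional space of holomorphic functions near $z_{0}$; pick a basis $h_{0},\dots,h_{k}$ of it and write $f_{j}=\sum_{l}c_{jl}h_{l}$, where the $(n+1)\times(k+1)$ matrix $(c_{jl})$ has full column rank $k+1$ because its rows, the coordinate vectors of the $f_{j}$, span $\mathbb{C}^{k+1}$. The Wronskian $\mathcal{W}=\det\bigl[h_{l}^{(m)}\bigr]_{0\le m,l\le k}$ of the linearly independent holomorphic functions $h_{l}$ is not identically zero — by induction on $k$, via the identity $\mathcal{W}(h_{0},\dots,h_{k})=h_{0}^{k+1}\,\mathcal{W}\bigl((h_{1}/h_{0})',\dots,(h_{k}/h_{0})'\bigr)$ — so, $p$ not being an inflection point, $\mathcal{W}(z_{0})\ne0$ and $\bigl[h_{l}^{(m)}(z_{0})\bigr]_{0\le m,l\le k}$ is invertible. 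Hence for $i\le k+1$ its first $i$ rows are linearly independent, and, since $f_{j}^{(m)}(z_{0})=\sum_{l}c_{jl}h_{l}^{(m)}(z_{0})$ exhibits the vector $\bigl(f_{0}^{(m)}(z_{0}),\dots,f_{n}^{(m)}(z_{0})\bigr)$ as the image of $\bigl(h_{l}^{(m)}(z_{0})\bigr)_{l}$ under the injective map $(c_{jl})$, the vectors $\bigl(f_{0}^{(m)}(z_{0}),\dots,f_{n}^{(m)}(z_{0})\bigr)$, $m=0,\dots,i-1$, are linearly independent. Therefore $\dim V^{i}=n+1-i$ for $0\le i\le k+1$ — so consecutive members of the chain drop in dimension by exactly one — and, since $V^{k+1}$ and $V^{\infty}:=\{a:\sum_{j}a_{j}f_{j}\equiv0\}$ both have dimension $n-k$ with $V^{k+1}\supseteq V^{\infty}$, in fact $V^{k+1}=V^{\infty}$, the space of forms vanishing on the hull.

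The refinements are then formal. For $i<k$, $V^{i+1}$ is a proper linear subspace of $V^{i}$, so $V^{i}\setminus V^{i+1}$ is non-empty and Zariski dense in $V^{i}$ and consists precisely of the hyperplanes meeting $F$ at $p$ with multiplicity exactly $i$, while $V^{i+1}\subset V^{i}$ is the claimed linear family of hyperplanes of multiplicity $\ge i+1$. At the top level the hyperplanes of multiplicity exactly $k$ form $V^{k}\setminus V^{k+1}=V^{k}\setminus V^{\infty}$, and modulo $V^{\infty}$ — equivalently, after restricting to the hull $\mathbb{P}^{k}$, which changes no intersection multiplicity because a hyperplane of $\mathbb{P}^{n}$ either contains the hull or meets it in a hyperplane — the space $V^{k}$ becomes one-dimensional, giving the unique order-$k$ osculating hyperplane at $p$. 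The only ingredient that is not bookkeeping is the non-vanishing of the Wronskian at a generic point of $F$, i.e. that a generic point is not an inflection point of the hyperplane linear system; this is the step I would expect to treat most carefully, the rest being the same linear-algebra argument as in Theorem~\ref{ts}.
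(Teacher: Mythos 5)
Your proposal is correct and follows essentially the same route as the paper: identify the intersection multiplicity of a hyperplane with $F$ at $p$ with the order of vanishing of the pulled-back linear form, use the non-vanishing of the Wronskian of the (spanning) coordinate functions at a generic point to see that the successive derivative conditions are independent, and take $V^{i}$ to be the solution space of the first $i$ of them. The only difference is cosmetic: you work in $\mathbb{P}^{n}$ and quotient by the forms vanishing on the linear hull, whereas the paper reduces at the outset, without loss of generality, to the case where $F$ spans $\mathbb{P}^{k}$.
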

\begin{proof}[\underline{Proof}]
Let the dimension of the projective hull of $F$ be equal to $k$. Without loss of generality we may assume that $F$ lies in the projective space of dimension $k$ and is not contained in any hyperplane. The latter implies that coordinate functions $f_{i}(z)$ are linearly independent. It is well known that any set of analytic functions is linearly independent if and only if the Wronskian of this set is not identically 0. Recall that the Wronskian can be computed by the following formula
\[\det
\begin{bmatrix}
    f_{0}       & f_{1} & f_{2} & \dots & f_{k} \\
    f_{0}^{(1)}       & f_{1}^{(1)} & f_{2}^{(1)} & \dots & f_{k}^{(1)} \\
    \hdotsfor{5} \\
    f_{0}^{(k)}       & f_{1}^{(k)} & f_{2}^{(k)} & \dots & f_{k}^{(k)}
\end{bmatrix}.
\]
\\Now let us provide a relation between that property and the statement of the Lemma. Expand each $f_{i}$ as a Laurent power series at $p$ and substitute them into the expression $H(x_{0},\ldots, x_{k})=c_{0}x_{0}+c_{1}x_{1}+\ldots+c_{n}x_{n}$. Regrouping the terms by their exponents $z^i$ we obtain$\colon$
\[
H(f_{0}(z),\ldots, f_{k}(z))=z^{0}(c_{0}f_{0}(0)+\ldots+c_{k}f_{k}(0))/0!+\]
\[z^{1}(c_{0}f_{0}^{(1)}(0)+\ldots+c_{k}f_{k}^{(1)}(0))/1!+\ldots\]
\[
z^{j}(c_{0}f_{0}^{(j)}(0)+\ldots+c_{k}f_{k}^{(j)}(0))/j!+\ldots
\]
\[
z^{k}(c_{0}f_{0}^{(k)}(0)+\ldots+c_{k}f_{k}^{(k)}(0))/k!.
\]
The curve $F$ and the hyperplane $\{H=0\}$ intersect with multiplicity $i$ at $p$ if and only if\\ $\bar{H}(z)=H(f_{0}(z),\ldots, f_{k}(z))=cz^{i}+h.o.t., c\neq0$. We list the first $k+1$ coefficients of the expansion $\bar{H}(z)$ in the following matrix$\colon$
\[A=
\begin{bmatrix}
    f_{0}(0)       & f_{1}(0) & f_{2}(0) & \dots & f_{k}(0) \\
    f_{0}^{(1)}(0)       & f_{1}^{(1)}(0) & f_{2}^{(1)}(0) & \dots & f_{k}^{(1)}(0) \\
    \hdotsfor{5} \\
    f_{0}^{(k)}(0)       & f_{1}^{(k)}(0) & f_{2}^{(k)}(0) & \dots & f_{k}^{(k)}(0)
\end{bmatrix}.
\]
As we can see, the matrix $A$ is indeed the Wronskian of the system of functions $\{f_{i}\}$ at zero. Since $F$ does not lie in any hyperplane, the Wronskian matrix is not identically zero. Switching from $p$ to another point $q\in U$ if necessary, we conclude that $A$ has maximal rank. To finish the proof, we choose $V^{i}$ to be the set of hyperplanes corresponding to solutions of the first $i$ rows of $A$.
\end{proof}

Let $f$ be an irreducible polynomial supported at $B$ and $g$ be a polynomial supported at $A$. Assume that $g=fc$ for some $c\in\mathbb{C}[x^{\pm1},y^{\pm1}]$. For a given $f$ the set of all such $g$ is a finite-dimensional vector space over $\mathbb{C}$. We denote this vector space by $\mathcal{V}_{A}^{B}(f)$. It is clear that for two polynomials $f_{1}, f_{2}$ with the same support set $B$ we have $\mathcal{V}_{A}^{B}(f_1)\simeq\mathcal{V}_{A}^{B}(f_2)$ via the natural map $g\mapsto gf_{2}/f_1$. Thus, for generic $f$, $\dim\mathcal{V}^{B}_{A}(f)$ does not depend on the choice of $f$.
\begin{lemma}\label{l3}
For generic $f\in\mathbb{C}^{B}$ the following inequality holds$\colon\dim\mathcal{V}_{A}^{B}(f)\leq|\conv(A)\ominus B|$. If $A$ is convex, then the inequality turns into equality. 
\end{lemma}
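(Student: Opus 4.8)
The plan is to reduce the computation of $\dim\mathcal{V}_A^B$ to a combinatorial statement about Newton polygons. Since $\mathbb{C}[x^{\pm1},y^{\pm1}]$ is an integral domain and $f\neq 0$, multiplication by $f$ is injective, so $\dim\mathcal{V}_A^B=\dim W$, where $W=\{c\in\mathbb{C}[x^{\pm1},y^{\pm1}]\mid \suppo(cf)\subseteq A\}$. I would then invoke two standard facts: first, that the Newton polygon of a product is the Minkowski sum of the Newton polygons, so $\conv(\suppo(cf))=\conv(\suppo c)+\conv(\suppo f)$ (this identity is insensitive to cancellations, which can only occur in the interior); and second, that for the generic $f$ used to define $\mathcal{V}_A^B$ one has $\conv(\suppo f)=\conv(B)$. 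I would also record that $\conv(A)\ominus B=(\conv(A)\ominus\conv(B))\cap\mathbb{Z}^2$, since $\conv(A)$ being convex makes ``$v+B\subseteq\conv(A)$'' equivalent to ``$v+\conv(B)\subseteq\conv(A)$''.

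For the upper bound: if $c\in W$, then $\suppo(cf)\subseteq A\subseteq\conv(A)$, so $\conv(\suppo c)+\conv(B)=\conv(\suppo(cf))\subseteq\conv(A)$; hence every $v\in\suppo c$ satisfies $v+\conv(B)\subseteq\conv(A)$, i.e.\ $v\in\conv(A)\ominus B$. Thus $W$ is contained in the space of Laurent polynomials supported inside $\conv(A)\ominus B$, which has dimension $|\conv(A)\ominus B|$, and therefore $\dim\mathcal{V}_A^B\leq|\conv(A)\ominus B|$.

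For the equality when $A$ is convex, I would prove the reverse inclusion. If $\suppo c\subseteq\conv(A)\ominus B$, then $\suppo c$ lies in the convex set $\conv(A)\ominus\conv(B)$, so $\conv(\suppo c)\subseteq\conv(A)\ominus\conv(B)$, whence $\conv(\suppo(cf))=\conv(\suppo c)+\conv(B)\subseteq(\conv(A)\ominus\conv(B))+\conv(B)\subseteq\conv(A)$. Since $\suppo(cf)$ consists of lattice points, $\suppo(cf)\subseteq\conv(A)\cap\mathbb{Z}^2=A$, using convexity of $A$; so $c\in W$. Together with the previous paragraph this gives $W=\mathbb{C}^{\conv(A)\ominus B}$ and $\dim\mathcal{V}_A^B=|\conv(A)\ominus B|$.

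The argument is mostly bookkeeping once injectivity of multiplication by $f$ and additivity of Newton polygons are granted; the points that need care are (i) using only $A\subseteq\conv(A)$ in the upper bound but the sharper $A=\conv(A)\cap\mathbb{Z}^2$ in the lower bound, so that the hypothesis ``$A$ convex'' is genuinely used (indeed one checks on small non-convex examples that the inequality is then strict), and (ii) confirming $\conv(\suppo f)=\conv(B)$ for the polynomial $f$ at hand. I expect (ii) to be the only real subtlety: if $f$ is not chosen generically its Newton polygon may be a proper subpolygon of $\conv(B)$, and then the bound only holds with $\conv(\suppo f)$ in place of $B$.
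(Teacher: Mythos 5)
Your proposal is correct and follows essentially the same route as the paper: both arguments rest on the standard convex-bodies fact that any cofactor $c$ with $cf$ supported in $\conv(A)$ must satisfy $\suppo(c)\subseteq\conv(A)\ominus B$, combined with injectivity of multiplication by $f$, and both use $A=\conv(A)\cap\mathbb{Z}^2$ to upgrade the inequality to an equality when $A$ is convex. The only differences are presentational (you work in the cofactor space $W$ rather than with the image of the map $\alpha\colon\mathbb{C}^{\conv(A)\ominus B}\to\mathbb{C}^{\conv(A)}$) and that you make explicit the genericity requirement $\conv(\suppo f)=\conv(B)$, which the paper leaves implicit.
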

\begin{proof}[\underline{Proof}]
Denote $\conv(A)\ominus B$ by $C$. It is well known from the theory of convex bodies that in the case $g=cf$ we have $c\in\mathbb{C}^{C}$ even if $\suppo(f)$ is not convex. Consider the linear map $$\alpha_{f}\colon\mathbb{C}^{C}\rightarrow\mathbb{C}^{\conv(A)}$$
$$
c\mapsto cf.
$$
Since $\alpha_{f}$ is an inclusion, we claim that $\dim\ima(\alpha_{f})=|C|$. Since $\mathcal{V}_{A}^{B}(f)\subset\ima(\alpha_{f})$ is a vector subspace consisting of the polynomials with zero coefficients at the monomials in $\conv(A)\setminus A$, we have $\dim\mathcal{V}_{A}^{B}(f)\leq\ima(\alpha_{f})=|C|$. If $A$ is a convex set, then we obtain the equality.
\end{proof}

Finally we are ready to prove Theorem \ref{th}

\begin{proof}[\underline{Proof of Theorem \ref{th}}]\label{pr}
Let $\gamma=\mathcal{Z}(f)$ be a smooth irreducible curve supported at $B$. We denote its image under the monomial map $m_{A}$ by $F$. The space of all hyperplanes containing $F$ is identified with $\mathcal{V}_{A}^{B}(f)$ by Lemma \ref{l1}, therefore, by Lemma \ref{l3}, the dimension of the projective hull of $F$ is $D=|A|-\dim(\mathcal{V}_{A}^{B}(f))-1\geq|A|-|\conv(A)\ominus B|-1$. 

Consider a generic (non-Weierstrass) point $p$ of the curve $\gamma$. By Lemma \ref{l2}, there exists a tuple of osculating hyperplanes $\{H_{i}\}_{i=0}^{D}$ such that $H_{i}$ intersects $F$ at $m_{A}(p)$ with multiplicity $i$. We denote the coefficients of these hyperplanes by $H_{a}^{i}$. Since $\gamma$ is a smooth curve, we can choose a holomorphic parametrization in some neighbourhood of $p\colon x=x(t), y=y(t)$. For each $i\in\{0,\ldots,D\}$ define $g_{i}=\sum_{a}H_{a}^{i}z^a\in\mathbb{C}^{A}$, $G_{i}=\mathcal{Z}(g_{i})$. Substitute $x(t), y(t)$ into the equation of $g$, and write the Laurent series for $g$
$$
g=g_{n}t^n+h.o.t.
$$
where $n$ is the intersection multiplicity of $\gamma$ and $G_{i}$ at $p$. Let $L$ be a linear form not vanishing at $m_{A}(p)$. Then $\sum_{a}H_{a}^{i}z_a/L$ is a meromorphic function that has the same order at $m_{A}(p)$ as $g$ has at $p$. Indeed, substituting $x(t), y(t)$ into the function $\sum_{a}H_{a}^{i}z_a/L$ we will obtain $(g_{n}t^n+h.o.t.)/(c_{0}+c_{1}t^{l}+h.o.t.), c_{0}, c_{1}\neq0, l>0$. Thus we have the Laurent expansion of the following form
$$
(g_{n}t^n+h.o.t.)/(c_{0}+c_{1}t^{l}+h.o.t.)=c^{-1}_{0}(g_{n}t^n+h.o.t.)(1-c_{1}c^{-1}_{0}t^{l}+h.o.t.)=c^{-1}_{0}g_{n}t^n+h.o.t.
$$
Therefore, the intersection multiplicities of $\gamma$ and $G_{i}$ at $p$ and $F$ and $H^{i}$ at $m_{A}(p)$ are both equal to $i$ as desired.
\end{proof}

Using the arguments as in the proof of Theorem \ref{th}, we can obtain the following rough generalization of Theorem $\ref{th}$.

\begin{theorem}\label{th3}
Let $(A,B)$ be a pair of subsets of $\mathbb{Z}^2$ such that neither $A$ nor $B$ lies on a segment and cannot be shifted to a proper sublattice, and let $\gamma$ be a generic curve supported at $B$. Consider a tuple $\{m_{1}, m_{2},\ldots, m_{l}\}$ of natural numbers such that $m_{1}+m_{2}+\ldots+m_{l}=|A|-|\conv(A)\ominus B|-1$. For generic points $\{p_1, p_2, \ldots , p_l\}$ there exists an osculating curve $\{g=0\}$ supported at $A$ such that $\gamma$ and $\{g=0\}$ intersect at $p_i$ with multiplicity at least $m_i$.
\end{theorem}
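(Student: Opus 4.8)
The plan is to reduce the statement to an elementary dimension count in $\mathbb{C}^A$, in the spirit of the proof of Theorem \ref{th} but without invoking the full osculating flag of Lemma \ref{l2}. Fix a generic curve $\gamma=\mathcal{Z}(f)$ supported at $B$; as in the proof of Theorem \ref{th} we may take it smooth and irreducible. Let $p_1,\dots,p_l$ be generic points of $\gamma$; in particular they are pairwise distinct and smooth on $\gamma$. For each $i$ choose a local holomorphic coordinate $t_i$ on $\gamma$ centred at $p_i$. Since each $p_i$ lies in the torus, the functions $z^a$ are holomorphic and non-vanishing near $p_i$, so for $g=\sum_{a\in A}c_a z^a\in\mathbb{C}^A$ the restriction $g|_\gamma$ has near $p_i$ an expansion $g|_\gamma=\sum_{j\ge 0}\ell_{i,j}(c)\,t_i^{\,j}$, where each $\ell_{i,j}$ is a linear functional on $\mathbb{C}^A$ (its coefficients are the Taylor coefficients of $z^a(t_i)$ at $p_i$). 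If $g\in\mathbb{C}^A$ satisfies $\gamma\not\subseteq\{g=0\}$, then $g|_\gamma\not\equiv 0$, the intersection of $\gamma$ and $\{g=0\}$ at $p_i$ is proper, and since $\gamma$ is smooth there the intersection multiplicity equals $\mathrm{ord}_{p_i}(g|_\gamma)$; this multiplicity is $\ge m_i$ exactly when $\ell_{i,0}(c)=\dots=\ell_{i,m_i-1}(c)=0$.

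Thus it suffices to produce $g\in\mathbb{C}^A$ with $\gamma\not\subseteq\{g=0\}$ lying in the linear subspace $W\subset\mathbb{C}^A$ cut out by the $\sum_{i=1}^l m_i=|A|-|\conv(A)\ominus B|-1$ equations $\ell_{i,j}(c)=0$ for $1\le i\le l$, $0\le j<m_i$. Being defined by at most $|A|-|\conv(A)\ominus B|-1$ linear equations, $W$ satisfies $\dim W\ge |\conv(A)\ominus B|+1$.

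To finish we exhibit such a $g$ that is not of the form $cf$ with $c\in\mathbb{C}[x^{\pm1},y^{\pm1}]$, i.e.\ $g\notin\mathcal{V}_A^B$. Every $g\in\mathcal{V}_A^B$ vanishes identically on $\gamma=\mathcal{Z}(f)$, hence $\ell_{i,j}(c)=0$ for all $i,j$; so $\mathcal{V}_A^B\subseteq W$. By Lemma \ref{l3}, $\dim\mathcal{V}_A^B\le|\conv(A)\ominus B|<|\conv(A)\ominus B|+1\le\dim W$, so $W\supsetneq\mathcal{V}_A^B$ and we may pick $g\in W\setminus\mathcal{V}_A^B$. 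By Lemma \ref{l1}, $g\notin\mathcal{V}_A^B$ forces $m_A(\gamma)\not\subseteq\pi_g$, equivalently $\gamma\not\subseteq\{g=0\}$. Then $g$ is nonzero, its zero set is a curve supported at $A$ meeting $\gamma$ properly, and by the computation above the intersection multiplicity at each $p_i$ is $\mathrm{ord}_{p_i}(g|_\gamma)\ge m_i$, as required.

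The only non-formal inputs are Lemmas \ref{l1} and \ref{l3} (through which the hypotheses on $A$ and $B$ enter), and the decisive one is the bound $\dim\mathcal{V}_A^B\le|\conv(A)\ominus B|$: this is precisely what makes the crude count $\dim W\ge|\conv(A)\ominus B|+1$ sufficient to escape the ``trivial'' solutions in $\mathcal{V}_A^B$. Accordingly the main points to verify carefully are this strict comparison of dimensions and the fact that ``intersection multiplicity $\ge m_i$ at a smooth point $p_i$'' is at most $m_i$ linear conditions on the coefficients of $g$ — which is where smoothness of $\gamma$ at the $p_i$ is used. In contrast to Theorem \ref{th}, we do not need the functionals $\ell_{i,j}$ to be linearly independent, only that there are few enough of them; this is why the conclusion only asserts multiplicity \emph{at least} $m_i$, and why no genericity of the $p_i$ beyond distinctness and smoothness is required. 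To upgrade ``at least'' to ``exactly'' one would additionally have to invoke Lemma \ref{l2} at each $p_i$ and prove that the combined Wronskian-type matrix has maximal rank for generic $p_i$, which is the genuinely harder assertion.
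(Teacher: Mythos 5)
Your proof is correct, and it takes a somewhat more elementary route than the one the paper intends. The paper proves Theorem \ref{th3} only by reference to the proof of Theorem \ref{th}: there one maps $\gamma$ into $\mathbb{CP}^{A}$ by the monomial map, identifies the hyperplanes containing the image with $\mathcal{V}_{A}^{B}$ via Lemmas \ref{l1} and \ref{l3}, and then uses the Wronskian/osculating flag of Lemma \ref{l2} before transporting intersection multiplicities back to the torus. You replace the projective picture by a direct count in the coefficient space $\mathbb{C}^{A}$: the conditions $\mathrm{ord}_{p_i}(g|_{\gamma})\geq m_i$ amount to $\sum_i m_i=|A|-|\conv(A)\ominus B|-1$ linear equations, so their solution space $W$ has dimension at least $|\conv(A)\ominus B|+1$, which by Lemma \ref{l3} strictly exceeds $\dim\mathcal{V}_{A}^{B}$, and any $g\in W\setminus\mathcal{V}_{A}^{B}$ works by Lemma \ref{l1}. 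Note that of Lemma \ref{l1} you only use the implication ``$\gamma\subset\{g=0\}\Rightarrow f\mid g$'', whose proof in the paper requires genericity (irreducibility) of $f$ alone and no genericity of $g$ --- this is exactly how the paper itself applies the lemma in the proof of Theorem \ref{th}, so the usage is legitimate. What your packaging buys: since the theorem claims multiplicity only \emph{at least} $m_i$, no maximal-rank or Wronskian statement is needed, and the points $p_i$ need only be distinct smooth points of $\gamma$ in the torus (indeed your argument never uses that $A$ is not contained in a segment, nor the sublattice hypothesis --- these enter only the sharper exact-multiplicity statements). What the paper's route through Lemma \ref{l2} buys is precisely that upgrade: independence of the osculating conditions at a generic point, hence curves with prescribed exact multiplicities, which, as you observe, your crude dimension count does not give.
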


Note that from Theorem \ref{th} we have $I_{A, B}\geq\min(|A|-1, |B|-1)$, thus one might naturally expect that $I_{A, B, C}\geq\min(|A|-1, |B|-1, |C|-1)$ for $n=3$. The following remark shows that this expectation is actually not the case.\label{wr}
\begin{rem}\label{ex5}
Consider a tuple $S=(A, B, C)$ of bodies in $\mathbb{Z}^3$, where $$A=\{(0,0,0),(1,0,0),(1,1,0),(0,1,0)\},$$ 
$$B=\{(0,0,0),(0,1,0),(0,1,1),(0,0,1)\},$$ 
$$C=\{(0,0,0),(1,0,0),(1,0,1),(0,0,1)\}.$$

According to the expected formula $I_{A, B, C}\geq\min(|A|-1, |B|-1, |C|-1)$, we have a root of multiplicity $|A|-1=|B|-1=|C|-1=3$, however the mixed volume of this tuple is $2$, which leads us to a contradiction.
\end{rem}

\section{\bf Curves supported at integral triangles}\label{s1}
We shall say that a subset of $\mathbb{Z}^2$ is a triangle if it consists of three non-collinear points. In this section, we classify all triangles $B$ such that curves supported at $B$ do not have an inflection point on the complex torus. This classification will be necessary for further computations.

\begin{lemma}
Any two curves supported at a given triangle are isomorphic via some transformation $x=\alpha\bar{x}, y=\beta\bar{y}$. In particular, any curve supported at a triangle is smooth.
\end{lemma}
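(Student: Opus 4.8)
The plan is to realize the coordinate change $x=\alpha\bar x$, $y=\beta\bar y$ as the solution of an explicit multiplicative linear system, the solvability of which is forced by the non-collinearity of $B$. Write $B=\{b_{0},b_{1},b_{2}\}$ and let the two curves be $\mathcal{Z}(f)$ and $\mathcal{Z}(g)$ with $f=\sum_{i=0}^{2}c_{i}z^{b_{i}}$, $g=\sum_{i=0}^{2}d_{i}z^{b_{i}}$, all coefficients lying in $\mathbb{C}^{*}$ (this is what ``supported at $B$'' means). Under the substitution each monomial transforms as $z^{b_{i}}\mapsto\alpha^{(b_{i})_{1}}\beta^{(b_{i})_{2}}\bar z^{b_{i}}$, so $f$ becomes $\sum_{i}c_{i}\alpha^{(b_{i})_{1}}\beta^{(b_{i})_{2}}\bar z^{b_{i}}$, and this equals $\lambda g$ for some $\lambda\in\mathbb{C}^{*}$ exactly when
\[
c_{i}\,\alpha^{(b_{i})_{1}}\beta^{(b_{i})_{2}}=\lambda\, d_{i},\qquad i=0,1,2.
\]
Since multiplying a Laurent polynomial by the nonzero constant $\lambda$ does not change its zero locus, it suffices to exhibit $(\alpha,\beta,\lambda)\in(\mathbb{C}^{*})^{3}$ satisfying these three equations; then $x=\alpha\bar x$, $y=\beta\bar y$ identifies $\mathcal{Z}(f)$ with $\mathcal{Z}(g)$.

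Next I would solve the system. Put $e_{i}=d_{i}/c_{i}\in\mathbb{C}^{*}$; the equations become $\alpha^{(b_{i})_{1}}\beta^{(b_{i})_{2}}\lambda^{-1}=e_{i}$, that is $M\star(\alpha,\beta,\lambda)=(e_{0},e_{1},e_{2})$, where $M$ is the $3\times 3$ integer matrix with rows $\bigl((b_{i})_{1},(b_{i})_{2},-1\bigr)$ acting multiplicatively on $(\mathbb{C}^{*})^{3}$. Its determinant equals, up to sign, twice the area of $\conv(B)$, which is nonzero precisely because the three points of $B$ are not collinear. A square integer matrix with nonzero determinant induces a surjective endomorphism of $(\mathbb{C}^{*})^{3}$: taking $N$ to be the adjugate of $M$ and $d=\det M\neq 0$ one has $MN=dI$, so given a target one extracts $d$-th roots of its entries and applies $N$ multiplicatively. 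Hence $(\alpha,\beta,\lambda)$ exists, which proves the first assertion.

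For the ``in particular'', note that $x=\alpha\bar x$, $y=\beta\bar y$ is an automorphism of $(\mathbb{C}^{*})^{2}$, so by the first part it is enough to check that a single curve supported at $B$ has no singular point in the torus; in fact one computation covers every $f$ at once. A point $p\in(\mathbb{C}^{*})^{2}$ can be singular on $\mathcal{Z}(f)$ only if $f(p)=0$ and, as $p$ has nonzero coordinates, also $(x\,\partial_{x}f)(p)=(y\,\partial_{y}f)(p)=0$. Setting $w_{i}=c_{i}p^{b_{i}}$ (the value at $p$ of the $i$-th term of $f$, hence nonzero), these three relations say that $(w_{0},w_{1},w_{2})$ is annihilated by the matrix with rows $(1,1,1)$, $\bigl((b_{0})_{1},(b_{1})_{1},(b_{2})_{1}\bigr)$, $\bigl((b_{0})_{2},(b_{1})_{2},(b_{2})_{2}\bigr)$, which is again invertible by non-collinearity; so $w=0$, a contradiction. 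Thus $\mathcal{Z}(f)$ is smooth in $(\mathbb{C}^{*})^{2}$, hence so is every curve supported at $B$.

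I do not anticipate a genuine obstacle. The only place that needs a little care is passing from the ``logarithmic'' linear picture to an actual solution in $(\mathbb{C}^{*})^{3}$, which is exactly where the divisibility of $\mathbb{C}^{*}$ (extraction of $d$-th roots) enters, together with the double use of the non-collinearity hypothesis --- once to make $M$ invertible, once to make the Jacobian matrix above invertible.
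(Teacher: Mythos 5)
Your proposal is correct and follows essentially the same route as the paper: the coordinate change is obtained by solving a multiplicative monomial system whose exponent matrix is invertible precisely because the three points are non-collinear (the paper normalizes by one monomial and solves a $2\times2$ system ``in radicals'', you keep the scalar $\lambda$ and solve a $3\times3$ system via the adjugate and $d$-th roots), and smoothness follows from the same observation that the values $c_{i}p^{b_{i}}$ would have to be killed by the invertible matrix with rows $(1,1,1)$ and the two coordinate rows of $B$. The only differences are cosmetic, e.g.\ you argue at a general torus point instead of first moving the would-be singular point to $(1,1)$.
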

\begin{proof}[\underline{Proof}]Assume we have two curves supported at the subset $\{p,q,r\}\subset\mathbb{Z}^2$. They are given by the following equations$\colon$
$$
ax^{p_{x}}y^{p_{y}}+bx^{q_{x}}y^{q_{y}}+cx^{r_{x}}y^{r_{y}}=0;
$$
$$
\alpha x^{p_{x}}y^{p_{y}}+\beta x^{q_{x}}y^{q_{y}}+\gamma x^{r_{x}}y^{r_{y}}=0.
$$
The following system $$x^{p_{x}-q_{x}}y^{p_{y}-q_{y}}=b\alpha/a\beta;$$ $$x^{p_{x}-r_{x}}y^{p_{y}-r_{y}}=c\alpha/a\gamma$$ is solvable in radicals. Let $(x_{0}, y_{0})$ be one of the solutions. Applying the transformation $x\mapsto x_{0}x$, $y\mapsto y_{0}y$ to the curve given by the first equation we obtain the second curve. This concludes the first part of the proof. 

Assume that $F\in\mathbb{C}^{B}$, $B=\{(p_{x}, p_{y}),(q_{x}, q_{y}),(r_{x}, r_{y})\}$ and the curve $F=0$ has a singular point on the complex torus, then the following system has a nonzero solution at $(1, 1)$.
    $$
    \begin{cases}
    F=0\\
    xF_{x}=0\\
    yF_{y}=0.\end{cases}
    $$
    The matrix of this system is 
    $$
    \begin{pmatrix}
1 & 1 & 1\\
p_{x} & q_{x} & r_{x}\\
p_{y} & q_{y} & r_{y}
\end{pmatrix}\sim
\begin{pmatrix}
1 & 1 & 1\\
0 & q_{x}-p_{x} & r_{x}-p_{x}\\
0 & q_{y}-p_{y} & r_{y}-p_{y}
\end{pmatrix}.
    $$
    Since the triangle is not degenerate, the minor in the right lower angle is non-vanishing and the only possible solution is trivial.
\end{proof}

Let us state the following well-known fact.
\begin{lemma}
Let $\gamma$ be a curve given by the polynomial equation $F=0$. Then a smooth point $p\in\gamma$ is an inflection point if and only if $\det(\Hess(p))=0$, where 
$$
\Hess=
\begin{pmatrix}
F_{xx} & F_{xy} & F_{x}\\
F_{xy} & F_{yy} & F_{y}\\
F_{x} & F_{y} & 0
\end{pmatrix}.
$$
\end{lemma}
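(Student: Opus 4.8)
The plan is to reduce the statement to an elementary local computation. Since $p$ is a smooth point of $\gamma$, at least one of $F_x(p),F_y(p)$ is nonzero; by the $x\leftrightarrow y$ symmetry of the matrix $\Hess$ we may assume $F_y(p)\neq0$. By the implicit function theorem the curve $\gamma$ is, in a neighbourhood of $p=(x_0,y_0)$, the graph $y=\phi(x)$ of a holomorphic function $\phi$ with $\phi(x_0)=y_0$. The tangent line $L$ at $p$ is $y=y_0+\phi'(x_0)(x-x_0)$, so $\phi(x)-\bigl(y_0+\phi'(x_0)(x-x_0)\bigr)=\tfrac12\phi''(x_0)(x-x_0)^2+\dots$, and hence the intersection multiplicity of $\gamma$ with $L$ at $p$ equals $2$ when $\phi''(x_0)\neq0$ and is $\geq 3$ otherwise. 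Thus $p$ is an inflection point if and only if $\phi''(x_0)=0$, and one should record that this criterion is independent of which local graph (i.e.\ which of the two coordinate functions) is used.

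Next I would compute $\phi''$ by differentiating the identity $F(x,\phi(x))\equiv 0$ twice. The first derivative gives $F_x+F_y\phi'=0$, so $\phi'=-F_x/F_y$; differentiating again yields $F_{xx}+2F_{xy}\phi'+F_{yy}(\phi')^2+F_y\phi''=0$, whence
$$\phi''=-\frac{F_{xx}F_y^2-2F_{xy}F_xF_y+F_{yy}F_x^2}{F_y^3}.$$
Since $F_y(p)\neq0$, this shows $\phi''(x_0)=0$ if and only if $F_{xx}F_y^2-2F_{xy}F_xF_y+F_{yy}F_x^2$ vanishes at $p$.

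Finally I would identify this quantity with $\det(\Hess)$ by expanding the determinant along the last row (or column):
$$\det(\Hess)=F_x\bigl(F_{xy}F_y-F_xF_{yy}\bigr)-F_y\bigl(F_{xx}F_y-F_xF_{xy}\bigr)=2F_xF_yF_{xy}-F_x^2F_{yy}-F_y^2F_{xx},$$
which is exactly $-\bigl(F_{xx}F_y^2-2F_{xy}F_xF_y+F_{yy}F_x^2\bigr)$. Combining the three steps, $\det(\Hess(p))=0$ iff $\phi''(x_0)=0$ iff $p$ is an inflection point; the case $F_x(p)\neq0$ follows verbatim after swapping the roles of $x$ and $y$, using that both $\det(\Hess)$ and the numerator above are symmetric under that swap. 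The only genuinely delicate point — hence what I regard as the main obstacle — is fixing the definition of an inflection point and verifying cleanly that it is equivalent to $\phi''=0$ and is insensitive to the chosen local parametrization; once that is pinned down, the rest is a single implicit differentiation and a $3\times 3$ determinant expansion.
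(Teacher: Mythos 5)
Your proof is correct, and it is essentially the same elementary argument as the paper's (the paper treats this as a well-known fact; its own proof, left in a comment, substitutes the tangent-line parametrization $\xi=-F_y t$, $\eta=F_x t$ into the second-order Taylor expansion of $F$ at $p$). The only cosmetic difference is that you obtain the key quantity $F_{xx}F_y^2-2F_xF_yF_{xy}+F_{yy}F_x^2$ by implicit differentiation of the local graph $y=\phi(x)$ rather than from the Taylor expansion; both routes reduce the inflection condition to $\det(\Hess(p))=0$, and your handling of the case $F_x(p)\neq0$ by symmetry is fine.
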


We claim that if a curve supported at $B$ has an inflection point in the complex torus, then there exists a curve which is supported at the same set and has an inflection point at $(1,1)$. The latter is easy to show by applying the transformation $x\mapsto x_{0}\bar{x}$, $y\mapsto y_{0}\bar{y}$, where $(x_{0}, y_{0})$ is the inflection point of the initial curve. If $F\in\mathbb{C}^B, B=\{(p_x, p_y), (q_x, q_y), (r_x, r_y)\}$ then clearly $F$ has the following form $ax^{p_{x}}y^{p_{y}}+bx^{q_{x}}y^{q_{y}}+cx^{r_{x}}y^{r_{y}}$ for some $a, b, c\in\mathbb{C}^{*}$ and for the given $B$ the Hessian determinant depends on the coefficients $a, b, c$, without loss of generality we will put $c=1$. For further computations, we denote by $\He(a)$ the Hessian determinant after the substitution $b=-1-a, c=1$, $x=1$, $y=1$.  

\begin{lemma}\label{l8}
If $B=\{(0,0),(n,m),(k,l)\}$ has no horizontal, vertical or anti-diagonal segments as a triangle, then $$\He(0),\He(-1)\neq 0.$$
\end{lemma}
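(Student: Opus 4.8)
The plan is to write $\He(a)$ out from its definition as an explicit polynomial in $a$ and then evaluate it at the two endpoints $a=0$ and $a=-1$. Rescaling coefficients does not change the zero locus, so we may take the trinomial supported at $B=\{(0,0),(n,m),(k,l)\}$ and passing through $(1,1)$ to be
\[
F = 1 + a\,x^{n}y^{m} + b\,x^{k}y^{l},\qquad b=-1-a,
\]
the relation $b=-1-a$ being precisely the condition $F(1,1)=0$. Differentiating and setting $x=y=1$ gives the six entries $F_{x}=an+bk$, $F_{y}=am+bl$, $F_{xx}=an(n-1)+bk(k-1)$, $F_{yy}=am(m-1)+bl(l-1)$, $F_{xy}=anm+bkl$, and $\He(a)$ is the determinant of the bordered matrix $\Hess$ from the Hessian criterion above, formed from these entries; it is a polynomial in $a$.

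Next I would specialize. For $a=0$ (so $b=-1$ and $F=1-x^{k}y^{l}$) the matrix $\Hess$ at $(1,1)$ becomes
\[
\begin{pmatrix} -k(k-1) & -kl & -k \\ -kl & -l(l-1) & -l \\ -k & -l & 0 \end{pmatrix},
\]
and expanding the determinant along the last row one checks that all the $k(k-1)$- and $l(l-1)$-contributions cancel, leaving $\He(0)=-kl(k+l)$. By the evident symmetry, setting $a=-1$ (so $b=0$ and $F=1-x^{n}y^{m}$) and repeating the same computation with $(k,l)$ replaced by $(n,m)$ yields $\He(-1)=-nm(n+m)$.

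It then remains to read the non-vanishing off the hypothesis. The edge of the triangle $B$ joining $(0,0)$ to $(k,l)$ is vertical exactly when $k=0$, horizontal exactly when $l=0$, and anti-diagonal (parallel to $(1,-1)$) exactly when $k+l=0$; since $B$ has no such edge, $-kl(k+l)\neq0$, i.e.\ $\He(0)\neq0$. The same three alternatives applied to $(n,m)$ give $\He(-1)\neq0$. Note that the third edge of $B$ never enters, so the hypothesis is in fact used with a bit of room to spare; note also that, although $F$ degenerates to a binomial at $a\in\{0,-1\}$, that binomial is still smooth on $(\mathbb{C}^{*})^{2}$, so the Hessian criterion genuinely applies there.

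The argument has no real obstacle. The only two points requiring care are the determinant simplification of the second paragraph — one must actually verify that the $k(k-1)$- and $l(l-1)$-terms cancel, rather than assume it — and the elementary bookkeeping matching the cases $k=0$, $l=0$, $k+l=0$ (and their analogues for $(n,m)$) to the three forbidden edge directions of $B$.
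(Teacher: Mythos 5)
Your proposal is correct and is essentially the paper's own argument: the paper also just evaluates the bordered Hessian of the trinomial $1+ax^ny^m+bx^ky^l$ (with $b=-1-a$) at $(1,1)$ for $a=0$ and $a=-1$, obtaining $\He(0)=-kl(k+l)$ and $\He(-1)=-mn(m+n)$, which are nonzero precisely because the edges from the origin are not horizontal, vertical or anti-diagonal. Your explicit determinant expansion and the matching of the three vanishing cases to the forbidden edge directions fill in exactly the "direct computations" the paper leaves implicit.
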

\begin{proof}[\underline{Proof}]
Indeed, the direct computations imply that $\He(0)=-k l (k + l)$ and $\He(-1)=-m n (m + n)$. 
\end{proof}

\begin{rem}
The monomial change $x=uv^{-1}, y=v^{-1}$ is projective and, in particular, maps lines to lines, and therefore preserves inflection points.
\end{rem}

\begin{rem}
We will say that a tuple $S$ of subsets of $\mathbb{Z}^2$ has a root/inflection point or singularity of  prescribed type if there exists a system with the same property supported at $S$.
\end{rem}

Now let us investigate what types of triangles have inflection points. The previous results allow us to focus on the following cases$\colon$
\begin{enumerate}
    \item $B=\{(0,0),(n,m),(k,l)\}$, no segments are vertical, horizontal or in $(1,-1)$ direction.
    \item $B={(0,m),(n,0),(k,0)}$, $k\neq0$, $n\neq0$. 
\end{enumerate}
\begin{lemma}\label{lem4}
Up to projective monomial change of variables, i.e. $x=uv^{-1}, y=v^{-1}$, triangles that have no inflection points are listed below (see Figure \ref{f3})$\colon$
\begin{enumerate}
    \item $\{(0,0),(1,0),(0,n)\}$, $n\neq0$;
    \item $\{(n,0),(1-n,0),(0,1)\}$;
    \item $\{(1,0),(n,0),(0,1)$, $n\neq1$;
    \item $\{(0,0),(n,0),(0,n)\}$, $n\neq0$.
\end{enumerate}
\end{lemma}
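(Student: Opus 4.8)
The plan is to reduce the statement to a single explicit evaluation of the Hessian determinant at the point $(1,1)$. After translating a vertex of $B$ to the origin (multiplication by a monomial, which changes nothing), a curve supported at $B=\{(0,0),(p,q),(r,s)\}$ and passing through $(1,1)$ has the shape $(-b-c)+b\,x^{p}y^{q}+c\,x^{r}y^{s}$, so the only parameter is $[b:c]\in\mathbb{P}^{1}$. Computing $\det\Hess$ at $(1,1)$ gives
$$\He \;=\; L_{1}L_{2}(L_{1}+L_{2})\;-\;bc(b+c)\,D^{2},\qquad L_{1}=pb+rc,\quad L_{2}=qb+sc,\quad D=ps-qr,$$
a binary cubic form in $(b,c)$ whose values at $[1:0]$, $[0:1]$, $[1:-1]$ are $pq(p+q)$, $rs(r+s)$ and $(p-r)(q-s)\bigl((p-r)+(q-s)\bigr)$; the non-vanishing of the last two of these when no edge of $B$ is bad is exactly the content of Lemma~\ref{l8}. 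The curve is genuinely supported on all of $B$ precisely when $[b:c]$ avoids the three points above — at each of them one coefficient vanishes and the curve collapses to a binomial supported on an edge of $B$ — and for every other value it is smooth (every triangle curve is, by the first lemma of Section~\ref{s1}), so $(1,1)$ is an inflection point there iff $\He$ vanishes. This gives the criterion I will use: \emph{$B$ carries no inflection point in the torus if and only if every zero of the cubic form $\He$ lies in $\{[1:0],[0:1],[1:-1]\}$}, equivalently $\He=\lambda\,b^{i}c^{j}(b+c)^{k}$ with $\lambda\neq0$ and $i+j+k=3$ (or $\He\equiv0$, which happens only when the curves supported at $B$ are lines).

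Next I note that each of $pq(p+q)$, $rs(r+s)$, $(p-r)(q-s)\bigl((p-r)+(q-s)\bigr)$ vanishes exactly when the corresponding edge of the triangle — from the origin to $(p,q)$, from the origin to $(r,s)$, and between $(p,q)$ and $(r,s)$ — is horizontal, vertical, or of direction $(1,-1)$. Since the group generated by monomial shifts, the swap $x\leftrightarrow y$ and the projective change $x=uv^{-1},y=v^{-1}$ (which preserves inflection points, by the Remark) permutes these three ``bad'' directions, I organize the argument by the number $N$ of bad edges of $B$. If $N=0$ (case (1) of the preceding discussion) the leading coefficient $pq(p+q)$ of $\He$ in $b$ is nonzero, so $\He$ has three zeros and none of them is a special point (two of the three relevant values are nonzero by Lemma~\ref{l8}, the third being that leading coefficient); hence every such $B$ has an inflection point and contributes nothing. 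If $N=1$, normalize the bad edge to be horizontal; the criterion then forces $\He=\lambda b^{3}$, and solving this condition in the exponents of $B$ gives exactly family~(2). If $N=2$, normalize the two bad edges to be horizontal and vertical through a common vertex, so $B\sim\{(0,0),(p,0),(0,s)\}$ and $\He=ps\,bc\,\bigl(p(1-s)b+s(1-p)c\bigr)$; this is a scalar times $b^{2}c$ or $bc^{2}$ iff $p=1$ or $s=1$, which gives family~(1) (family~(3) being a projective-monomial image of the sub-case $s=1$). If $N=3$, then $B\sim\{(0,0),(n,0),(0,n)\}$ and $\He=n^{3}(1-n)\,bc(b+c)$, all of whose zeros are special; this is family~(4). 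The converse — that each listed family really is inflection-free — follows at once from the criterion (for family~(4) one may alternatively use the classical fact that the Hessian of $x^{n}+y^{n}+c$ is a constant times the monomial $(xy)^{n-2}$, vanishing only on the coordinate lines, i.e.\ off the torus).

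The main work, and the main source of possible error, is the bookkeeping of the second step: carrying out, for each $N$, the short but error-prone determination of when $\He$ has the prescribed pure-power form (a handful of linear conditions on $p,q,r,s$), and — more delicately — checking that the reduction via the $S_3$ of projective monomial changes really does bring every triangle into one of the normal forms used, so that the configurations with $N=2,3$, which are not literally of the form $\{(0,m),(n,0),(k,0)\}$ with $n,k\neq0$, are not overlooked and the final list is exhaustive. One must also confirm, as is built into the criterion, that a zero of $\He$ at a special point never comes from a genuine inflection point of a curve supported on all of $B$ (there the curve is a binomial), while every zero outside the special set does produce one, since $(1,1)$ is then a smooth point of a curve genuinely supported at $B$.
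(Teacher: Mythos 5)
Your proposal is correct, and its core is the same as the paper's: normalize the inflection point to $(1,1)$ by a torus scaling, view the Hessian determinant there as a polynomial in the one remaining coefficient parameter, and require that its only zeros be the binomial degenerations, which (this is the role of Lemma \ref{l8}) can be zeros exactly when the corresponding edge is horizontal, vertical or antidiagonal. Where you genuinely differ is in the organization, and your version is cleaner: keeping the parameter homogeneous turns the Hessian into the binary cubic $L_{1}L_{2}(L_{1}+L_{2})-bc(b+c)D^{2}$ (this identity is correct: the cross terms collapse via $qL_{1}-pL_{2}=-Dc$ and $sL_{1}-rL_{2}=Db$), whose values at the three special points are exactly the three edge invariants; you then stratify by the number $N$ of bad edges and use the $S_{3}$ of projective monomial changes, which does act as the full symmetric group on the three bad directions, so your normal forms for $N=1,2,3$ are attainable. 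The paper instead treats only $N=0$ versus ``a horizontal edge with apex on the $y$-axis'' and classifies through the degree and the roots at $0,-1$ of the quadratic factor $\Te(a)$, so your $N=1,2,3$ strata are merged there into one subcase analysis. The computations you defer do come out as you assert: for $N=1$ (bad edge $(0,0)$--$(p,0)$, third vertex $(r,s)$) killing the two non-pure coefficients forces $s=1$ and $p=2r+1$, i.e.\ family (2) after translation (the pure cube is the cube of the linear form vanishing at the bad edge's special point; whether that is $b^{3}$ or $c^{3}$ is only a labelling choice); for $N=2$, where necessarily $p\neq s$, the third factor of $ps\,bc\bigl(p(1-s)b+s(1-p)c\bigr)$ vanishes at a special point iff $p=1$ or $s=1$, giving family (1) and its $S_{3}$-image (3); for $N=3$ one gets $p=s$, family (4); and $N=0$ yields an inflection point exactly as in the paper's first case. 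The special-point caveat is also harmless: a binomial supported on a non-bad edge has Hessian a nonzero monomial times a unit on the torus, and on a bad edge it is a union of lines, so degenerate members never produce inflection points — the same convention the paper adopts by discarding $a=0,-1$. So all deferred steps go through, and your closed cubic formula plus the $N$-stratification is a tidier route to the same list.
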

\begin{proof}[\underline{Proof}]
\begin{enumerate}
    \item Let $F=1+ax^n y^m-(1+a)x^l y^k$, then direct computations imply that the leading coefficient of $\He(a)$ equals $(l - m) (k - n) (k + l - m - n)$ and, therefore it never vanishes in the initial conditions. Therefore, in this case $\He(a)$ is a non-constant polynomial and thus it has a root. By Lemma \ref{l8} this root is not equal to $0$ or $-1$ and the curve $\{F=0\}$ has an inflection point.
    \item Let $F = x^n + a x^m - (1 + a) y^k$. In that case $$\He(a)=(1 + a) k ((k-1) (a m + n)^2 - (1 + 
      a) k (a (m-1) m + (n-1) n)).$$ We may ignore $k(a+1)$ and consider terms in brackets. Let us introduce the notation $\Te(a)=(k-1) (a m + n)^2 - (1 + a) k (a (m-1) m + (n-1) n)$. A trinomial curve has no inflection points if and only if $\Te(a)$ has no roots except $a=0$ and $a=-1$. We classify all the cases that satisfy this condition$\colon$\\
      
      $\deg(\Te(a))=2$, the leading coefficient $(k - m) m\neq0$
      \begin{enumerate}
          \item $\Te(0)=\Te^{\prime}(0)=0\Rightarrow B=\{(0,0),(1,0),(0,n)\}$ or $B=\{(n,0),(n-1,0),(0,n)\}$ (these cases are affinely equivalent),
          \item $\Te(-1)=\Te^{\prime}(-1)=0\Rightarrow B=\{(n,0),(1-n,0),(0,1)\}$,
          \item $\Te(0)=\Te(-1)=0\Rightarrow B=\{(0,0),(n,0),(0,1)$ or $B=\{(1,0),(n,0),(0,1)$;
      \end{enumerate}
      
      $\deg(\Te(a))=1$, $m=0$
      \begin{enumerate}
          \item $\Te(0)=0\Rightarrow B=\{(0,0),(n,0),(0,n)\}$,
          \item $\Te(-1)=0$ gives no new configurations;
      \end{enumerate}
      
      $\deg(\Te(a))=1$, $m=k$
      \begin{enumerate}
          \item $\Te(0)=0$ gives no new configurations,
          \item $\Te(-1)=0$ gives no new configurations;
      \end{enumerate}
      
      $\deg(\Te(a))=0$ gives no new configurations.
      
\end{enumerate}
\end{proof}

\begin{figure}[h!]
  \centering
     \includegraphics[width=0.6\linewidth]{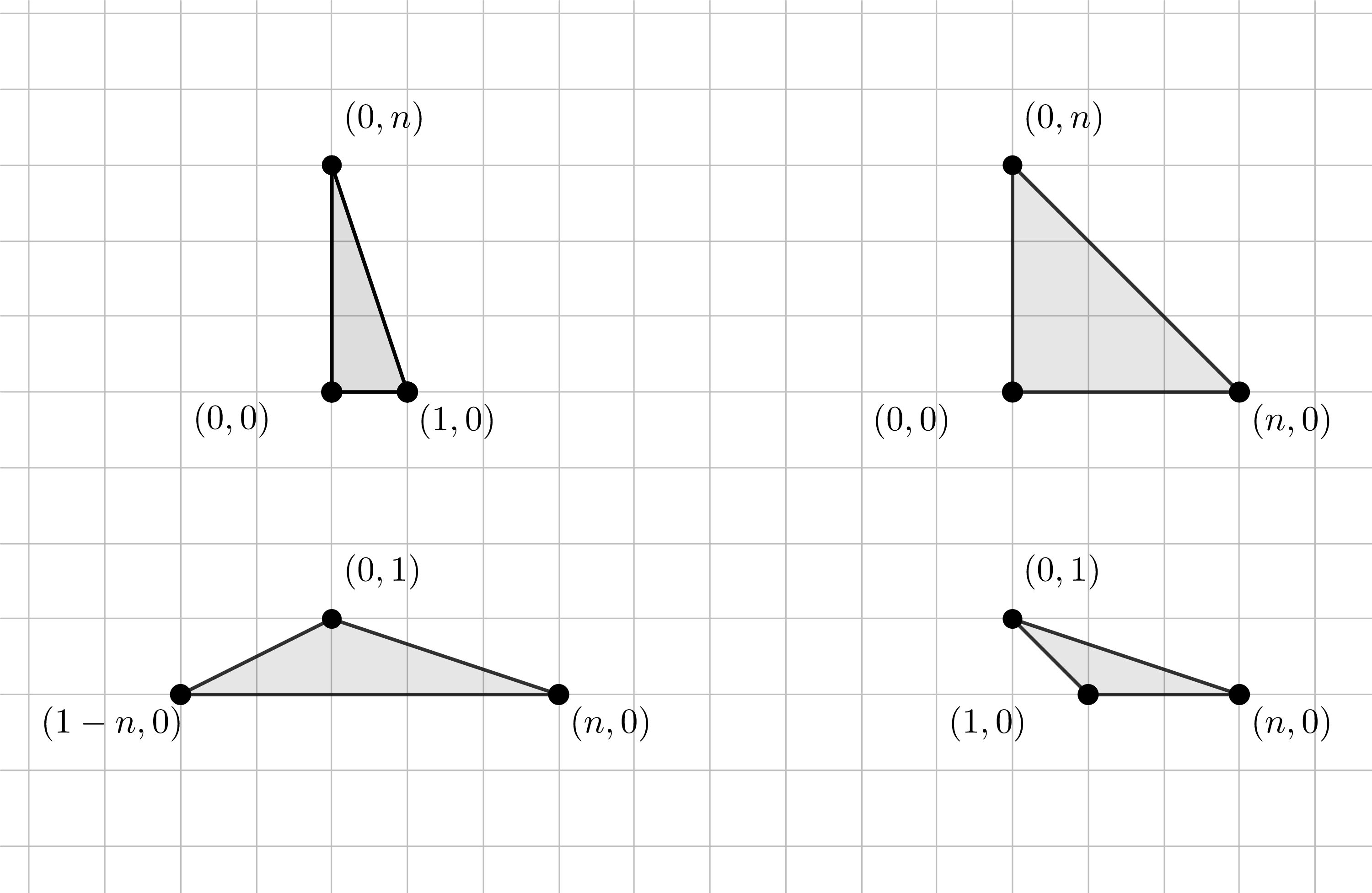}
    \caption{Triangles that have no inflection points}
\label{f3}
\end{figure}

\section{\bf Classification of systems with roots of low multiplicity}

\begin{definition}\label{d1}
Let $T=(A, B)$ be a tuple of Newton bodies. We will say that $T$ has a root of high multiplicity if there exists a system supported at $T$ that has a root of multiplicity higher than 2.
\end{definition}

\begin{definition}\label{d2}
Let $X$ be a finite convex subset of $\mathbb{Z}^2$. We will say that $X$  is contained in $n$ horizontal segments if there exist $n$ distinct horizontal lines $l_1, l_2,\ldots, l_n$, such that $X=\cup (X\cap l_i)$ and $X\cap l_i\neq\varnothing\ \forall i$.
\end{definition}

In this section we will classify all pairs of Newton polygons that do not have a root of high multiplicity. We will broadly use the following fact$\colon$ the property from Definition \ref{d1} of Newton bodies is invariant with respect to shifts and any invertible monomial changes of variables. For example, if a system have a multiple root then it will have a multiple root with the same multiplicity after any monomial change of variables. In this section we prove the following result

\begin{theorem}\label{th2}
A pair $(A,B)$ of Newton polygons does not have a root of high multiplicity if and only if up to monomial change of variables or transposition $(A,B)$ is one of the following (see Figure \ref{fim})$\colon$
\begin{enumerate}
\setcounter{enumi}{-1}
    \item The trivial case $A=\{(0, 0)\}$ and $B$ is arbitrary.
    \item $A=\{(0, 0), (1, 0)\}$, $B$ is contained in $3$ horizontal segments or $A=\{(0, 0), (1, 0), (2, 0)\}$, $B$ is contained in $2$ horizontal segments; Description of all sets that are contained in $2$ or $3$ horizontal segments can be found in Lemma \ref{cl23} and in the figure \ref{f2cl24};
    \item $A=B$ (possibly up to a shift) are of full dimension, and they are one of the following\\
    $\{(0, 0), (1, 0), (0, 1), (1, 1)\}$, $\{(0, 0), (1, 0), (0, 1), (2, 0)\}$;
    \item $A$ is the standard simplex and $B$ is a subset of 2-simplex.
    \end{enumerate}
\end{theorem}

\begin{figure}[h!]
  \centering
     \includegraphics[width=0.8\linewidth]{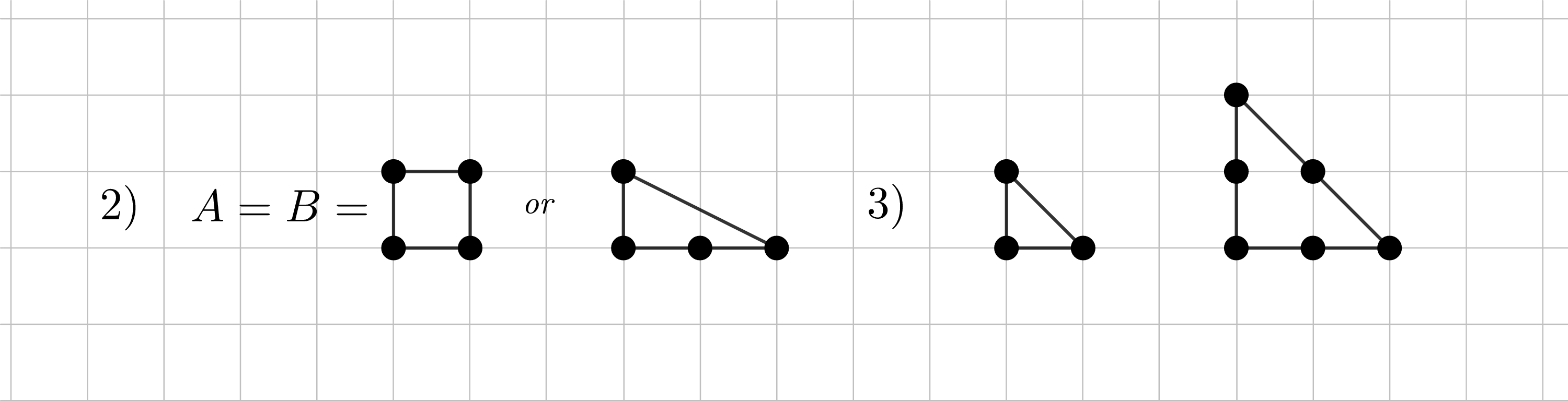}
    \caption{Pairs of full-dimensional Newton bodies without roots of multiplicities higher than 2}
\label{fim}
\end{figure}

First we describe geometric constraints for $(A, B)$.

\begin{lemma}\label{lm2}
If a pair $(A, B)$ of Newton polygons does not have a root of high multiplicity then one of the following conditions holds 
\begin{enumerate}
    \item $A$ or $B$ is a segment;
    \item $A=B$ and $|A|\leq 4$;
    \item $A\neq B$ and w.l.o.g. $|A|\leq 3$ and $|B|-|B\ominus A|\leq 3$.
\end{enumerate}
\end{lemma}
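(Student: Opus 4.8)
The plan is to use Theorem \ref{th} (and its refinement in terms of the dimension $D$ of the projective hull computed in the proof of Theorem \ref{th}) to translate the condition ``$(A,B)$ has no root of high multiplicity'' into a numerical inequality, and then enumerate the finitely many lattice-geometric configurations that satisfy it. Recall that for $(A,B)$ with neither set on a segment and not simultaneously shiftable to a proper sublattice, Theorem \ref{th} produces a root of multiplicity $i$ for every $i \le |A| - |\conv(A)\ominus B| - 1$, and symmetrically with the roles of $A$ and $B$ exchanged. Hence ``no root of multiplicity $\ge 3$'' forces both
\[
|A| - |\conv(A)\ominus B| - 1 \le 2 \quad\text{and}\quad |B| - |\conv(B)\ominus A| - 1 \le 2,
\]
i.e. $|A| - |\conv(A)\ominus B| \le 3$ and $|B| - |\conv(B)\ominus A| \le 3$, whenever $A$ and $B$ are both two-dimensional and not simultaneously sublattice-degenerate. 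If one of $A,B$ is a segment we are in case (1) and there is nothing to prove, so from now on assume both are two-dimensional.

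Next I would extract cases (2) and (3) from these inequalities. Observe that $\conv(A)\ominus B$ is nonempty (after a shift) and always contains at least one point, so $|A| - |\conv(A)\ominus B| \le 3$ already gives $|A| \le 4$ when $|\conv(A)\ominus B| = 1$, and more restrictive bounds when $B$ is large enough that the ``erosion'' $\conv(A)\ominus B$ is forced to be a single point. The key sub-step is: if $A \ne B$ then $B$ cannot be shifted into a translate of $A$ in more than one way unless $B$ is itself essentially one-dimensional relative to $A$; one shows $|\conv(A)\ominus B| \ge 2$ only when $B$ (up to translation) fits inside $\conv(A)$ with a free direction, which combined with two-dimensionality of both sets pins things down. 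Handling $A = B$: then $\conv(A)\ominus A \ni 0$, and if $|\conv(A)\ominus A| \ge 2$ then $A$ has a nontrivial ``period'' direction, forcing $A$ to lie on two parallel lattice lines only in degenerate ways; one checks $|\conv(A)\ominus A| = 1$ for all genuinely two-dimensional $A$ with $|A| \ge 5$ except configurations that still violate the bound, yielding $|A| \le 4$, which is case (2). When $A \ne B$, w.l.o.g. $|A| \le |B|$; the inequality $|A| - |\conv(A)\ominus B| \le 3$ with $|\conv(A)\ominus B| \ge 1$ gives $|A| \le 4$, and a short argument that $|A| = 4$ (two-dimensional, $\ne B$) already forces a root of multiplicity $3$ via the other Newton body or via Theorem \ref{th3} brings it down to $|A| \le 3$; then $|B| - |B \ominus A| \le 3$ is exactly the stated bound in case (3), using $\conv(B)\ominus A \supseteq B \ominus A$ and the fact that $A$ being a (possibly degenerate) triangle makes these agree.

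I expect the main obstacle to be the bookkeeping around $\conv(A)\ominus B$ versus $A \ominus B$ and the borderline count $|A| = 4$: Theorem \ref{th} as stated only gives a lower bound on $|I_S|$, so ruling out, say, a two-dimensional quadrilateral $A \ne B$ requires producing an actual multiplicity-$3$ root rather than just invoking the inequality, and this is where one must either appeal to Theorem \ref{th3} (choosing the partition $\{m_i\}$ cleverly) or do an explicit osculation argument. I would organize the proof so that the easy direction is all the numerology above, and the delicate direction is a finite case check: for each surviving pair $(|A|, |B|)$ type, either exhibit a system with a triple root (using the osculating-curve construction from Section \ref{thr12}, specialized to triangles via the inflection-point analysis of Section \ref{s1}) or confirm it lies in the list (1)--(3). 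The statement of Lemma \ref{lm2} itself only asserts the numerical/geometric constraints, so strictly the proof ends once the three bulleted alternatives are established; the finer identification of the list in Theorem \ref{th2} is deferred. Thus the proof of Lemma \ref{lm2} reduces to: (a) the two-inequality consequence of Theorem \ref{th}; (b) the case split $A$ segment / $A = B$ / $A \ne B$; (c) the arithmetic bounds $|A| \le 4$ in the equal case and $|A| \le 3$, $|B| - |B\ominus A| \le 3$ in the unequal case, with the $|A|=4$, $A\neq B$ elimination being the one spot that needs a genuine construction rather than a count.
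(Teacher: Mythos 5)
Your overall strategy (apply Theorem \ref{th} to both $(A,B)$ and $(B,A)$, then split into the cases ``segment'', ``$A=B$'', ``$A\neq B$'') is the same as the paper's, but the $A\neq B$ branch has a genuine gap built on a false claim. You assert that $\conv(A)\ominus B$ ``is nonempty (after a shift) and always contains at least one point''; this is simply not true -- $\conv(A)\ominus B\neq\varnothing$ means $B$ can be translated inside $\conv(A)$, which fails for most pairs (and your remark that a larger $B$ makes the bound ``more restrictive'' is backwards: a larger erosion $\conv(A)\ominus B$ \emph{weakens} the bound on $|A|$, while an empty erosion gives the strongest bound $|A|\le 3$). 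Because of this you land on $|A|\le 4$ and then need to eliminate $|A|=4$, which you yourself flag as ``the one spot that needs a genuine construction''; you never supply it, and neither Theorem \ref{th3} (which only redistributes the same total $|A|-|\conv(A)\ominus B|-1$ among several points) nor ``the other Newton body'' obviously does the job. Similarly, in the $A=B$ case your worry about $|\conv(A)\ominus A|\ge 2$ is vacuous: for any finite nonempty $A$ one has $\conv(A)\ominus A=\{0\}$ (a nonzero shift $c$ would push the vertex of $\conv(A)$ extremal in direction $c$ outside $\conv(A)$), so $|A|-2\le 2$ follows at once.

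The missing idea, which is exactly what the paper uses, is a translation dichotomy: if $A\neq B$ (up to shift) then $A\ominus B$ and $B\ominus A$ cannot both be nonempty, since $A+a\subset B$ and $B+b\subset A$ force $B+a+b\subset B$, hence $a+b=0$ and $B=A+a$. Thus w.l.o.g.\ $A\ominus B=\varnothing$ (and, $A$ being a Newton polygon, $\conv(A)\ominus B=\varnothing$ as well), so Theorem \ref{th} applied to $(A,B)$ yields a root of multiplicity $|A|-1$, forcing $|A|\le 3$, while applied to $(B,A)$ it yields $|B|-|B\ominus A|-1\le 2$. This makes the whole $A\neq B$ case a one-line count with no construction needed, whereas your version cannot be completed as written: to repair it you would have to replace the nonemptiness claim by this dichotomy (or, after fixing w.l.o.g.\ $|A|\le|B|$, note that $\conv(A)\ominus B\neq\varnothing$ would force $B$ to be a translate of $A$), at which point the $|A|=4$ borderline disappears automatically.
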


\begin{proof}[\underline{Proof}]
If neither $A$ nor $B$ is a segment then the conditions of Theorem \ref{th} are satisfied since $A, B$ are full-dimensional. In the case
$A=B$ there exists a system with root of multiplicity $|A|-1-1$ and since $(A, A)$ does not have a root of high multiplicity, we have $|A|-2\leq 2$. In the second case, assume that $A\ominus B$ and $B\ominus A$ are not empty. Then, there exist shift vectors $a, b$ such that $A+a\subset B$ and $B+b\subset A$ therefore $B+b+a\subset A+a\subset B$ and we obtain $a+b=0$ and $a=-b$. Then $A+a\subset B$ and $\{B-a\subset A\}\Rightarrow \{B\subset A+a\}\Rightarrow \{B=A+a\}$ which yields a contradiction. We obtain that one of them, say $B$, cannot be shifted to a subset of $A$ then $A\ominus B=\varnothing$ and system  has a root of multiplicity $|B|-|B\ominus A|-1\leq 2$ or $|A|-1\leq 2$. This concludes the proof.
\end{proof}

Now using Lemma \ref{lm2} we consider all the cases when the tuple $(A, B)$ does not have a root of high multiplicity. The first case is when $A$ is a segment. Without loss of generality we assume that $A$ is a horizontal segment passing through the origin. 

\begin{lemma}\label{lm222}
If $A$ is a horizontal segment of $h$ points and $B$ is contained in $v$ horizontal segments, then $(A, B)$ has no root of high multiplicity if and only if $(h, v)$ coincides with one of the following pairs
\begin{enumerate}
    \item $(n,1)$, $n\in\mathbb{N}$;
    \item $(1,n)$, $n\in\mathbb{N}$;
    \item $(3, 2)$, $(2,3)$, $(2,2)$.
\end{enumerate}
\end{lemma}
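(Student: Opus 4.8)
The plan is to reduce the whole question to the one‑variable Theorem~\ref{ts}. Since $A$ is horizontal, every $g\in\mathbb{C}^A$ is a Laurent polynomial in $x$ alone with at most $h$ monomials, so a system supported at $(A,B)$ has the shape $g(x)=q(x,y)=0$ with $q\in\mathbb{C}^B$. I would first dispose of the degenerate cases $h=1$ and $v=1$. If $h=1$ then $g$ is a monomial and the system has no root in $(\mathbb{C}^*)^2$. If $v=1$ then all monomials of $B$ share a $y$‑exponent $b$, so $q=y^{b}\tilde q(x)$, and on the torus the zero set of the system is either empty or a union of vertical lines $\{x=x_0\}$ (over the common roots of $g$ and $\tilde q$), hence has no isolated point. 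In either case $M_S=\varnothing$, so the pairs $(n,1)$ and $(1,n)$ trivially have no root of high multiplicity.

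Assume now $h,v\ge 2$. The computational heart is the identity, already recorded in the discussion following Theorem~\ref{th}, that a torus root $(x_0,y_0)$ of $g=q=0$ has multiplicity $m_1 m_2$, where $m_1$ is the multiplicity of $x_0$ as a root of $g(x)$ and $m_2$ is the multiplicity of $y_0$ as a root of $q(x_0,y)$. I would either cite this or give the short argument: writing $\xi=x-x_0$, $\eta=y-y_0$ and $g=\xi^{m_1}\cdot(\text{unit})$, the local intersection number $\dim_{\mathbb C}\mathcal O_{(x_0,y_0)}/(g,q)$ equals the order in $\eta$ of the determinant of multiplication by $q$ on the free $\mathbb C[[\eta]]$‑module $\mathbb C[[\eta]][\xi]/(\xi^{m_1})$; in the basis $1,\xi,\dots,\xi^{m_1-1}$ this matrix is lower‑triangular Toeplitz with every diagonal entry equal to $q(x_0,y_0+\eta)$, so its determinant is $q(x_0,y_0+\eta)^{m_1}$, whose $\eta$‑order is $m_1 m_2$.

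Using this I would prove
$$M_S=\{\,m_1 m_2:\ 1\le m_1\le h-1,\ 1\le m_2\le v-1\,\}.$$
For the inclusion ``$\subseteq$'': at a torus root we have $m_1\ge1$, and by the Remark after Theorem~\ref{ts} an at‑most‑$h$‑sparse univariate polynomial has no nonzero root of multiplicity $\ge h$, so $m_1\le h-1$; likewise $q(x_0,\cdot)$ is an at‑most‑$v$‑sparse polynomial in $y$ which is not identically zero (otherwise $\{x=x_0\}$ would lie in both curves and the root would not be isolated), whence $1\le m_2\le v-1$. For ``$\supseteq$'': given $m_1\le h-1$, Theorem~\ref{ts} supplies $g\in\mathbb{C}^A$ with a nonzero root $x_0$ of multiplicity $m_1$; given $m_2\le v-1$, Theorem~\ref{ts} supplies a $v$‑sparse polynomial in $y$, with exponents the $v$ distinct $y$‑coordinates occurring in $B$, having a nonzero root of multiplicity $m_2$. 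Since each such exponent $b$ is realized by some $(a_b,b)\in B$ and $x_0\neq0$, one can choose the coefficients of $q$ so that $q(x_0,y)$ equals that polynomial (set the coefficient of $x^{a_b}y^{b}$ to the prescribed value times $x_0^{-a_b}$ and all others to $0$); then $(x_0,y_0)$ is an isolated torus root of multiplicity $m_1 m_2$.

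Finally I would read off the conclusion: $M_S$ contains an integer $>2$ if and only if $(h-1)(v-1)>2$, i.e.\ if and only if $(h,v)\notin\{(2,2),(2,3),(3,2)\}$; combining this with the degenerate cases yields exactly the list $(n,1),(1,n),(3,2),(2,3),(2,2)$. I expect the only genuinely delicate point to be the multiplicity formula $m=m_1 m_2$, together with the verification in the ``$\supseteq$'' direction that the constructed root is truly isolated and has that exact multiplicity; everything else is bookkeeping layered on top of Theorem~\ref{ts}.
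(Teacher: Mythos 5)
Your proof is correct and takes essentially the same route as the paper: both reduce everything to univariate sparse polynomials via the multiplicity identity $m=m_{1}m_{2}$ (which the paper only records in the remark following Theorem \ref{th}) combined with Theorem \ref{ts} and its remark. Your version is in fact more complete than the paper's terse case analysis, since you actually prove the product formula and determine $M_{S}=\{m_{1}m_{2}\colon 1\le m_{1}\le h-1,\ 1\le m_{2}\le v-1\}$ exactly before reading off the list.
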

\begin{proof}[\underline{Proof}]
If $v=1$ or $h=1$, then the system $f=g=0, f\in\mathbb{C}^{A}$ has no isolated roots. If $v\geq 4$, $h\geq2$ then by Theorem \ref{ts} there exists a polynomial $p\in\mathbb{C}^{B}$ such that $p(1,y)$ has a root $y$ of multiplicity $3$. Then the system $g=f=0$ has the root $(1,y)$ of multiplicity $3$. If $h\geq 4$, $v\geq2$ then there exists a polynomial $f\in\mathbb{C}^{A}$ with root of multiplicity $3$, hence there exists a system $f=g=0$ with root of multiplicity 3. We obtain that if the tuple $(A, B)$ has no root of high multiplicity, then $h, v\leq 3$, which yields the following cases$\colon$ 
\begin{enumerate}
    \item $h=3$. The only possible case when $f\in\mathbb{C}^A$ and $g\in\mathbb{C}^B$ have roots of multiplicity $2$. Assume that $f$ has root $x=1$ of multiplicity $2$. Then consider $g=a+by+cy^2\in\mathbb{C}^{B}$ such that $a=-c (p-q)/p,b=-c q/p$. Then if $v=3$ the system $f=g=0$ has a root of multiplicity 4. 
    \item $h=2$. Assume that system has a root $(1,y)$, then the system has a root of multiplicity $3$ if and only if $g=a+by+cy^2=0$ has a root of multiplicity $3$. But this is impossible.
\end{enumerate}

\end{proof}

Now, let us classify all $B\subset\mathbb{Z}^2$ such that $B$ is contained in at most $3$ horizontal segments.

\begin{lemma}\label{cl23}
Up to affine transformations of $\mathbb{Z}^2$, preserving the horizontal direction, a convex subset $X\subset\mathbb{Z}^2$ is contained in at most 3 horizontal segments in one of the following cases$\colon$ 
\begin{enumerate}
    \item Sets, contained in 1 horizontal segment$\colon X=\{(0, 0), (1, 0),\ldots, (m, 0)\}, m\in\mathbb{N}\cup\{0\}$.
    \item Sets, contained 2 horizontal segments$\colon$
    \begin{enumerate}
        \item $X=\{(0,0), (k, l)\}$ with $k,l$ relatively prime.
        \item $X$ is the union of two arbitrary horizontal segments, first with the vertical coordinate $0$ and the second with the vertical coordinate $1$.
    \end{enumerate}    
    \item Sets, contained in 3 horizontal segments$\colon$
        \begin{enumerate}
        \item $X$ is the convex hull of a union of three arbitrary convex horizontal subsets of $\mathbb{Z}^2$, first with the vertical coordinate $0$, second with the vertical coordinate $1$ and the third with the vertical coordinate $2$. 
         \item $X=\{(0, 0), (1, 0), (2, 3), (1, 1)\}$
         \item $X$ is a primitive triangle with no horizontal edges or a non-horizontal segment of lattice length 2. 
    \end{enumerate}
        
\end{enumerate}
\end{lemma}
\begin{proof}
\begin{enumerate}
\item Assume $X$ is contained in 1 horizontal segment. This case is trivial.
    \item 
Assume $X$ is contained in $2$ horizontal segments. Let the height, lower base and the upper base be equal to $q, \delta_1, \delta_2$ respectively, (see figure \ref{f1cl23} case 1). The area of $X$ is given by the familiar formula $S_X=q(\delta_1+\delta_2)/2$ and from the different point of view by the Pick's formula $S_X=-1+0+(\delta_1+1+\delta_2+1)/2$. From this we obtain $(\delta_1+\delta_2)(q-1)=0$. Thus, in this case $\delta_1=\delta_2=0$ and $X$ is a segment $\{(0,0), (k, l)\}$ with $k,l$ relatively prime, or $q=1$ and $\delta_1$, $\delta_2$ are arbitrary.

\item Assume $X$ is contained in $3$ horizontal segments. We will classify all such sets by the number of points on the middle segment of $X$. If the middle segment contains at least two points, then there exist two primitive triangles (punctured triangles on figure \ref{f1cl23} case 2). Their areas are $q_{1}/2$ and $q_{2}/2$ but, by Pick's formula the area of a primitive triangle equals $1/2$, consequently $q_1=q_2=1$, in this case. If the middle segment consists of one interior point (see figure \ref{f1cl23} case 3), then by Pick's formula $(q_1+q_2)(\delta_1+\delta_2)/2=-1+1+(\delta_1+1+\delta_2+1)/2$, thus we obtain the equation $q\delta=\delta+2$, where $\delta=\delta_1+\delta_2$, $q=q_1+q_2$. The only possible solutions are $q=2,\delta=2$ and $q=3,\delta=1$. Without loss of generality we can assume that $\delta_1\geq\delta_1$. Additionally, we can apply the following conditions $q_1\delta_1/2=-1+0+(\delta_1+1+1)/2\iff (q_1-1)\delta_1=0$, $q_2\delta_2/2=-1+0+(\delta_2+1+1)/2\iff (q_2-1)\delta_2=0$. Finally, we have two systems of equations$\colon$
\begin{multicols}{2}
$$
\begin{cases}
(q_1-1)\delta_1=0\\
(q_2-1)\delta_2=0\\
\delta_1+\delta_2=2\\
q_1+q_2=2
\end{cases}
$$

$$
\begin{cases}
(q_1-1)\delta_1=0\\
(q_2-1)\delta_2=0\\
\delta_1+\delta_2=1\\
q_1+q_2=3
\end{cases}
$$

\end{multicols}
Their solutions are$\colon \{q_1=q_2=1, \delta_1=\delta_2=1\}$, $\{q_1=q_2=1, \delta_1-2=\delta_2=0\}$ and $\{q_1-2=q_2-1=0, \delta_1=\delta_2-1=0\}$, $\{q_1-2=q_2-1=0, \delta_1-1=\delta_2=0\}$. Cases $\{q_1=1, q_2=2\}$, $\{q_1=2, q_2=1\}$ are actually equivalent, so we do not consider the first one. Simple geometrical argumentation also shows that the case $\{q_1=2, q_2=1, \delta_1=1, \delta_2=0\}$ is impossible. In the last case the middle segment consists of the unique boundary point (figure \ref{f1cl23} case 4). The area of the punctured triangle is $1/2$, then, by Pick's formula again $1/2+q\delta/2=-1+0+(\delta_1+1+\delta_2+1+1)/2$, thus $q\delta=\delta$. And in this final case $X$ is a primitive triangle or a segment consisting of 3 points.

\end{enumerate}

\end{proof}

\begin{figure}[h!]
  \centering
     \includegraphics[width=0.8\linewidth]{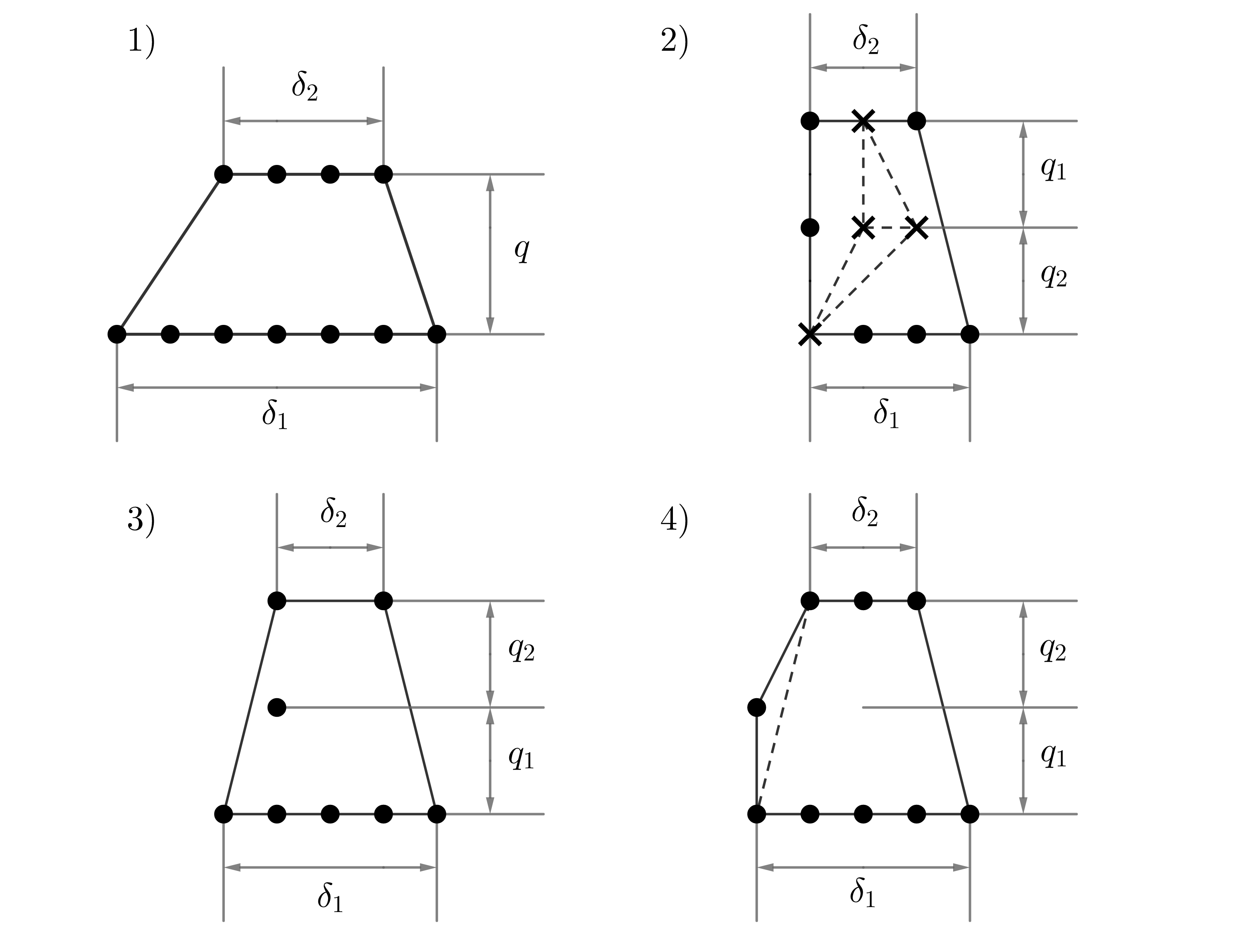}
    \caption{Illustration to the proof of lemma \ref{cl23}}
\label{f1cl23}
\end{figure}

\begin{figure}[h!]
  \centering
     \includegraphics[width=0.8\linewidth]{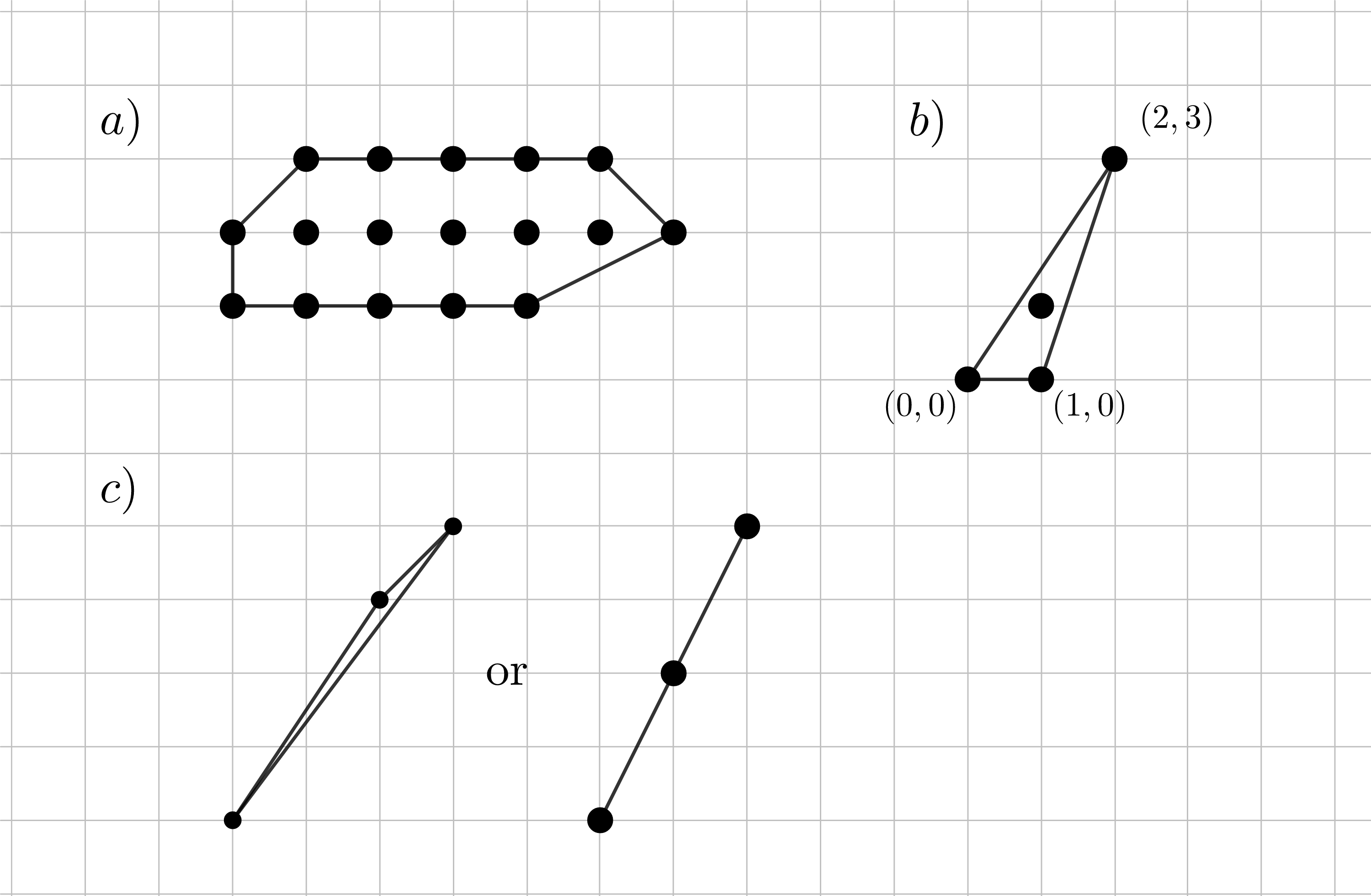}
    \caption{Sets those are contained in 3 horizontal segments}
\label{f2cl24}
\end{figure}

Now, we move to the case $A=B$. We have $|A\ominus B|=|A\ominus A|=1$. Theorem \ref{th} guarantees that the root of multiplicity $|A|-|A\ominus A|-1=|A|-2$ exists. Let us solve the following geometric inequality$\colon$
$$
|A|-2\leq 2.
$$
We have the following cases$\colon$
\begin{enumerate}
    \item Cases $|A|\leq3$ are trivial. 
    \item $|A|=4$. In non-degenerate case, there exists three points that do not lie on the same line. The forth point may lie in the interior or on the boundary, using the computations based on the Pick's formula we analyse all situations.
\end{enumerate}

\begin{lemma}\label{lm43}
If $|A|=4$, then, up to affine automorphisms of lattice, $A$ is one of the bodies displayed in the second part of Figure \ref{f1}.
\end{lemma}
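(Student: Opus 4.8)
The plan is to apply Pick's formula to $\overline{A}=\conv(A)$ and then run the resulting short finite case analysis, normalizing at each step by an affine automorphism of the lattice. Since $A$ is convex and two-dimensional with $|A|=4$, the set $\overline{A}$ is a lattice polygon; writing $I$ for its number of interior lattice points and $B$ for its number of boundary lattice points, we have $I+B=4$, $B\ge 3$, and $\mathrm{Area}(\overline{A})=I+B/2-1$. Hence only $(I,B)=(1,3)$ and $(I,B)=(0,4)$ can occur, and I would treat these in turn.

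For $(I,B)=(1,3)$ the polygon $\overline{A}$ is a triangle whose only boundary lattice points are its three vertices, so all three edges are primitive, it has exactly one interior lattice point, and its area is $3/2$. I would carry one edge onto $[(0,0),(1,0)]$ by an affine automorphism; the opposite vertex is then $(a,b)$ with $|b|=3$ by the area condition, so $b=3$ after a reflection, and primitivity of the two remaining edges forces $\gcd(a,3)=\gcd(a-1,3)=1$, i.e.\ $a\equiv 2\pmod 3$. Since the shear $(x,y)\mapsto(x+cy,y)$ fixes the chosen edge and changes $a$ by $3c$, we may take $a=2$, which pins $\overline A$ down and forces the interior point, giving $A=\{(0,0),(1,0),(2,3),(1,1)\}$.

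For $(I,B)=(0,4)$ all four lattice points lie on $\partial\overline{A}$ and $\mathrm{Area}(\overline{A})=1$, so $\overline{A}$ is either a triangle with a single non-vertex lattice point on one edge or a genuine quadrilateral. In the triangle subcase that distinguished edge carries three collinear lattice points, so I would normalize it to $[(0,0),(2,0)]$ with midpoint $(1,0)$; the third vertex is $(a,b)$ with $|b|=1$, hence $b=1$ after a reflection, and a shear fixing that edge makes $a=0$, giving $A=\{(0,0),(1,0),(2,0),(0,1)\}$. In the quadrilateral subcase a diagonal is a primitive segment, since an interior lattice point of it would lie in the interior of $\overline{A}$, contradicting $I=0$; I would normalize that diagonal to $[(0,0),(1,0)]$ and split $\overline{A}$ along it into two lattice triangles whose positive areas sum to $1$, hence are each $1/2$, so both triangles are unimodular. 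The two remaining vertices are then $(p,1)$ and $(q,-1)$; a shear fixing the diagonal normalizes $p=0$, and strict convexity of the quadrilateral with vertices $(0,0),(0,1),(1,0),(q,-1)$ (the collinear values $q=0,2$ being excluded) forces $q=1$. Thus $\overline{A}$ is the unimodular parallelogram on $(0,0),(0,1),(1,0),(1,-1)$, which is $\mathrm{GL}_2(\mathbb{Z})$-equivalent to the unit square $\{(0,0),(1,0),(0,1),(1,1)\}$. Collecting the three outcomes gives exactly the bodies in the second part of Figure \ref{f1}.

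The step I expect to require the most care is the quadrilateral subcase: one must check that after normalizing a diagonal every convex configuration is actually reached, that the degenerate (collinear) values of $q$ are correctly discarded, and that the two unimodular triangles really assemble into a parallelogram rather than into some other unit-area quadrilateral. The triangle cases and the bookkeeping with Pick's formula should be routine.
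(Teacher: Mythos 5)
Your argument is correct and arrives at the right classification, but it is organized differently from the paper's proof, so a short comparison is in order. The paper first selects three of the four points spanning a primitive (unimodular) triangle, normalizes that triangle to the standard simplex, and uses Pick's formula only to bound the area of the resulting body ($S=i/2+1$ with $i\in\{0,1\}$); this area bound is converted into metric bounds on the distance of the fourth point $V$ from the three edge lines, confining $V$ to a bounded region whose lattice points are then enumerated --- the enumeration is what the second part of Figure \ref{f1} records, with several mutually equivalent realizations (e.g.\ $\{(0,0),(1,0),(0,1),(-1,3)\}$, $\{(0,0),(1,0),(0,1),(3,-1)\}$, $\{(0,0),(1,0),(0,1),(-1,-1)\}$) identified only in the text that follows the lemma. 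You instead apply Pick's formula directly to $\conv(A)$, reduce to the two profiles $(I,B)\in\{(1,3),(0,4)\}$ (your $B$ here clashes notationally with the paper's support set $B$, so rename it), and classify each hull type by normal forms under shears and reflections, obtaining exactly one canonical representative per equivalence class: the triangle with one interior point, the triangle $\{(0,0),(1,0),(2,0),(0,1)\}$ with an edge midpoint, and the unimodular parallelogram equivalent to the unit square. Your route buys a cleaner statement (three explicit $\mathrm{GL}_2(\mathbb{Z})$-classes, no distance estimates, and no need for the paper's unjustified though true claim that three of the four points always form a primitive triangle), while the paper's route buys the explicit list of admissible positions of the fourth point relative to the standard simplex, which is the form in which Figure \ref{f1} and the subsequent case discussion are phrased; since the lemma is stated up to affine automorphisms of the lattice, your three normal forms do prove it, and the delicate points you flagged (primitivity of the chosen diagonal, exclusion of the collinear values $q=0,2$, and the forced value $q=1$) are all handled correctly.
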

\begin{proof}[\underline{Proof}]
We can choose 3 points that lie in the primitive triangle and make it the standard simplex via suitable change of variables. We denote the number of interior and boundary points of body by $i$ and $b$ respectively. Since $A$ is full-dimensional $b\geq 3$ and $i=0,1$. By Pick's formula, $S_{ABCV}=i/2+1$. Assume that the forth point $V$ is located on the distance $r$ from $BC$ on the line $l_{1}$, then the area of $ABCV$ satisfies the inequality $1/2+\sqrt{2}r_{1}/2\leq S_{ABCV}$. We consider two possible cases 
\begin{enumerate}
    \item $i=0$. $S_{ABCV}=1\Rightarrow r_{1}\leq 1, r_{2}\leq 1, r_{3}\leq 1$;
    \item $i=1$. $S_{ABCV}=3/2\Rightarrow r_{1}\leq \sqrt{2}, r_{2}\leq 2, r_{3}\leq 2$.
\end{enumerate}
As we can see, $V$ can be located only in the triangle bounded by the lines $l_{1}, l_{2}, l_{3}$. All possible options are depicted in the Figure \ref{f1}.  
\end{proof}

Bodies $\{(0,0),(1,0),(0,1),(-1,3)\}$ and $\{(0,0),(1,0),(0,1),(3,-1)\}$ are equivalent to\\ $\{(0,0),(1,0),(0,1),(-1,-1)\}$. A curve supported at $A=\{(0,0),(1,0)$, $(0,1),(-1,-1)\}$ is a cubic curve that has at least one inflection point on the complex torus. Hence we conclude that $(A, A)$ has a root of high multiplicity. Other bodies have mixed volume $2$ and then they have no roots of multiplicity higher than 2.

\begin{figure}[]
  \centering
  \begin{subfigure}[b]{0.7\linewidth}
    \includegraphics[width=\linewidth]{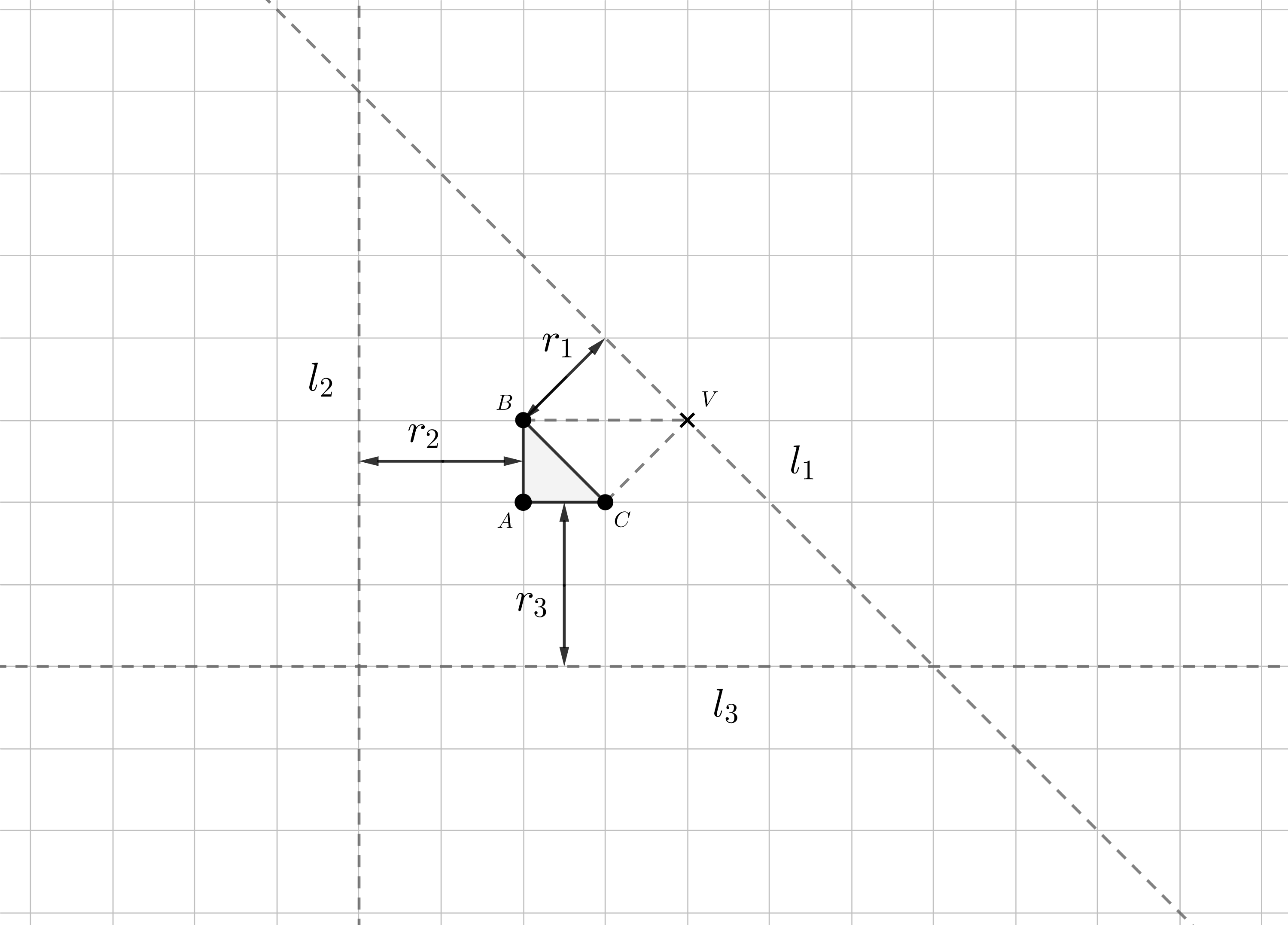}
    \caption{}
  \end{subfigure}
  \begin{subfigure}[b]{0.65\linewidth}
    \includegraphics[width=\linewidth]{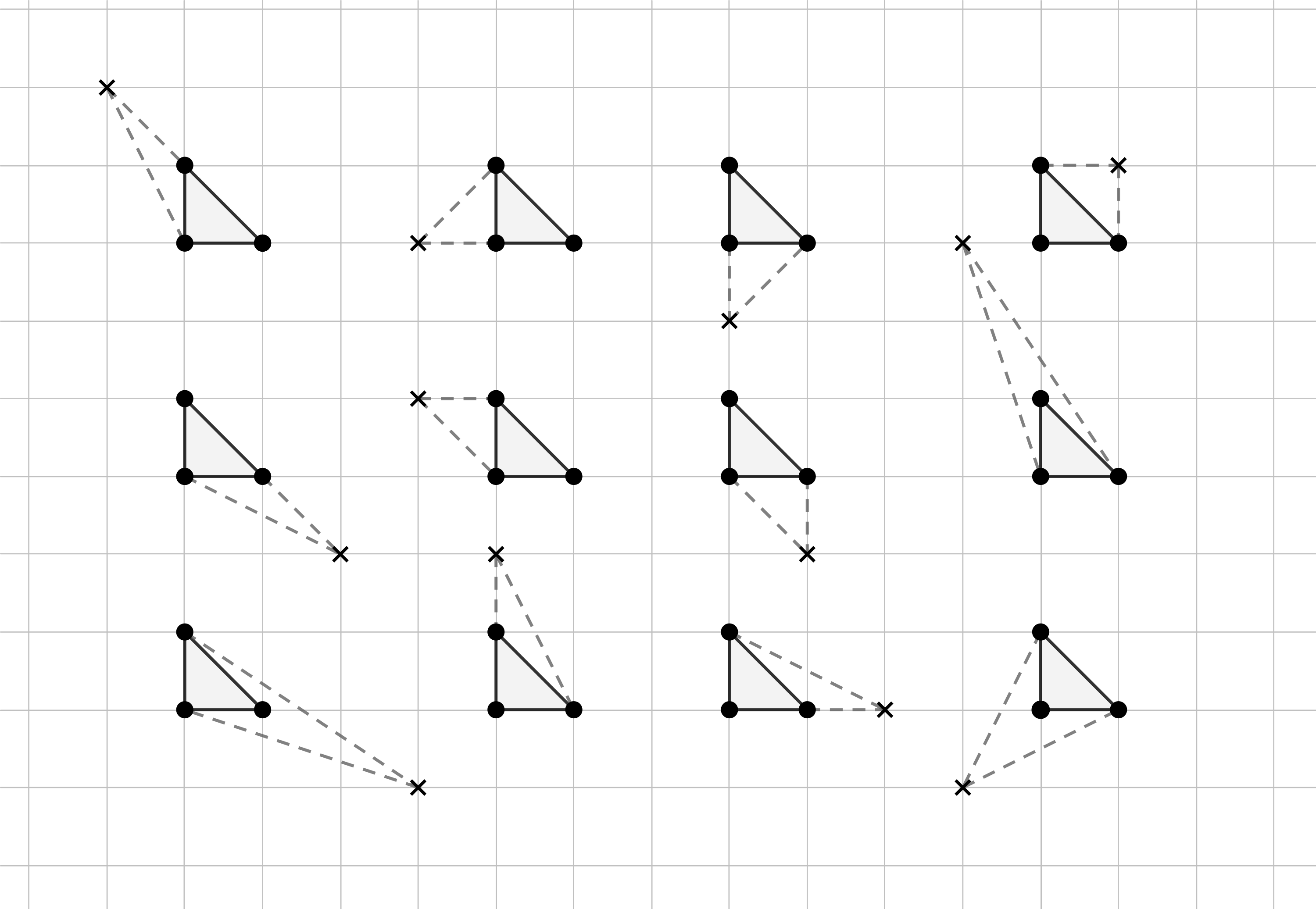}
    \caption{}
  \end{subfigure}
  \caption{Bodies that are obtained in the proof of Lemma \ref{lm43}.}
  \label{f1}
\end{figure}

We just proved the following statement$\colon$

\begin{lemma}
If $A$ is a full-dimensional body then $(A,A)$ has no roots of high multiplicity if and only if $A$ is the standard simplex, or $|A|=4$ and up to monomial change of variables $A$ is one of the bodies from the item 2 in Figure \ref{fim}. 
\end{lemma}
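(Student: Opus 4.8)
The statement is essentially the synthesis of the case analysis carried out above, so the plan is to assemble the pieces and make explicit the one step that is not purely formal. Begin with a normalisation: for a full-dimensional Newton body $A$ one has $\conv(A)\ominus A=\{0\}$, because any $c\in\mathbb{Z}^2$ with $c+A\subseteq\conv(A)$ satisfies $c+A\subseteq\conv(A)\cap\mathbb{Z}^2=A$, hence $c+A=A$ and then $c=0$ by finiteness of $A$; moreover a full-dimensional lattice polygon cannot be shifted into a proper sublattice, so the pair $(A,A)$ meets the hypotheses of Theorem \ref{th} (as was already used in Lemma \ref{lm2}). Theorem \ref{th} then yields a system supported at $(A,A)$ with a root of multiplicity $|A|-|\conv(A)\ominus A|-1=|A|-2$, and the assumption that $(A,A)$ has no root of high multiplicity forces $|A|-2\le 2$, i.e. $|A|=3$ or $|A|=4$ (fullness excludes $|A|\le 2$).

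For $|A|=3$, fullness together with $A=\conv(A)\cap\mathbb{Z}^2$ implies, via Pick's formula, that $\conv(A)$ has area $1/2$; hence $A$ is a primitive triangle and is carried to the standard simplex by a shift composed with an element of $GL_2(\mathbb{Z})$, i.e. by a monomial change of variables. For the converse in this case, a system supported at $(A,A)$ with $A$ the standard simplex is a pair of lines, so $\mv(A,A)=1$, and by the inequality $\max(M_S)\le\mv(S)$ with $S=(A,A)$ there is no root of multiplicity $\ge 3$.

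For $|A|=4$ the plan is to invoke Lemma \ref{lm43}: up to a monomial change of variables $A$ is one of the bodies of Figure \ref{f1}, which up to equivalence are the unit square $\{(0,0),(1,0),(0,1),(1,1)\}$, the body $\{(0,0),(1,0),(2,0),(0,1)\}$, and the body $\{(0,0),(1,0),(0,1),(-1,-1)\}$ (to which $\{(0,0),(1,0),(0,1),(-1,3)\}$ and $\{(0,0),(1,0),(0,1),(3,-1)\}$ are equivalent). For the first two, $\conv(A)$ has area $1$, hence $\mv(A,A)=2$, so $\max(M_{(A,A)})\le 2$ and $(A,A)$ has no root of high multiplicity; these are exactly the bodies in item~2 of Figure \ref{fim}, which together with the standard simplex gives the ``if'' direction. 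The third body must be excluded: by the classification of Section \ref{s1} — it, or already a triangular subset such as $\{(1,0),(0,1),(-1,-1)\}$, does not appear on the list of Lemma \ref{lem4}, so the computation of $\He(a)$ shows a generic curve $\gamma$ supported at $A$ has an inflection point $p$ in the complex torus — one takes $\ell$ to be the tangent line to $\gamma$ at $p$, whose support lies in $\{(0,0),(1,0),(0,1)\}\subseteq A$; then $(\gamma,\ell)$ is a system supported at $(A,A)$ with an isolated root at $p$ of multiplicity $(\gamma\cdot\ell)_p\ge 3$, so $(A,A)$ does carry a root of high multiplicity. Combining the two directions yields the equivalence.

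I expect the only genuinely non-routine point to be this last one: one must be certain that the excluded four-point body really supports a curve with an inflection point \emph{inside the torus} rather than only at the toric boundary, which is precisely what the Hessian computation $\He(a)\not\equiv 0$ with $\He(0),\He(-1)\neq 0$ and the triangle classification in Section \ref{s1} provide; everything else reduces either to the direct application of Theorem \ref{th} and Lemma \ref{lm43}, or to the observation $\mv(A,A)\le 2$.
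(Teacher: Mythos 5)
Your proposal is correct and follows essentially the same route as the paper: Theorem \ref{th} forces $|A|\le 4$, the $|A|=3$ case gives the standard simplex, Lemma \ref{lm43} handles $|A|=4$, the bodies of mixed volume $2$ are kept, and the body equivalent to $\{(0,0),(1,0),(0,1),(-1,-1)\}$ is excluded via an inflection point coming from the triangle classification of Section \ref{s1}. The only difference is that you spell out the mechanism (pairing the curve with its tangent line, whose support $\{(0,0),(1,0),(0,1)\}$ lies in $A$, to get an isolated root of multiplicity at least $3$), which the paper leaves implicit.
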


Now we are going to classify all Newton pairs $(A, B)$ such that $A$ and $B$ are not equal to each other and satisfy the conditions of Theorem \ref{th}. We will use two important technical facts$\colon$Lemma \ref{lem4} and Lemma \ref{lm2}.

Without loss of generality we will assume that $A\ominus B=\varnothing$. Applying Theorem \ref{th} to both pairs $(A, B)$ and $(B, A)$, we obtain the following two inequalities$\colon$

$$
|A|-|A\ominus B|-1=|A|-1\geq 3
$$
$$
|B|-|B\ominus A|-1\geq 3.
$$
Each of them is a sufficient condition on $(A, B)$ to have root of high multiplicity. We will solve the opposite inequalities to classify all pairs $(A, B)$ that do not have a root of high multiplicity.
$$
\begin{cases}
|A|-|A\ominus B|-1=|A|-1\leq2\\
|B|-|B\ominus A|-1\leq2.
\end{cases}
$$
From the inequalities we obtain $|A|\leq 3$, $|B|-|B\ominus A|\leq3$. Then, after suitable change of variables, $A=\{(0, 0), (1, 0), (0, 1)\}$. Since $A$ and $B$ are convex, then $B\ominus A$ is also convex, let us denote it by $C$. If $C$ is not empty then $A+C$ is a subset of $B$ and we get the inequality $|B|\geq |C+A|$, hence 
$$
|B|-|B\ominus A|\leq3\Rightarrow |C+A|-|C|\leq|B|-|B\ominus A|\leq 3\Rightarrow|C+A|-|C|\leq3. 
$$

The set $A$ can be reduced to the form $A=L_{x}\cup L_{y}$ where $L_{x}=\{(0, 0), (1, 0)\}$ and $L_{y}=\{(0, 0), (0, 1)\}$. We claim that$$ |C+A|=|C+L_{x}|+|C+L_{y}|-|(C+L_{x})\cap(C+L_{y})|,$$ and $$|C+L_{x}|=|C|+h+1,$$ where $h$ is the height of $C$ (see Figure \ref{f2}), and $|C+L_{y}|-|(C+L_{x})\cap(C+L_{y})|\geq 1$. Indeed, to obtain the last inequality consider a point $D$ of $C$ whose vertical coordinate is maximal and note that $D+(0, 1)$ cannot lie in $C+L_{x}$. Finally we deduce the following inequality$\colon$ 
$$
|C+A|-|C|\leq3\Rightarrow |C|+h+1+1-|C|\leq|C+A|-|C|\leq 3\Rightarrow h\leq1.
$$
Similarly, we get the inequality $l\leq1$, where $l$ is the horizontal length of $C$. Now, denote $|B|-|A+C|$ by $d$ and rewrite the inequality above$\colon$
$$
|B|-|B\ominus A|-1=|B|-|A+C|+|A+C|-|C|-1\leq2\Rightarrow d+|A+C|-|C|-1\leq 2\Rightarrow
$$
$$
\Rightarrow d+|C|+h+1+1-|C|\leq d+|A+C|-|C|\leq 3\Rightarrow
$$
$$
d+h\leq1.\ \text{And by the symmetry, we obtain the inequality}\ d+l\leq1.
$$
\begin{figure}[h!]
  \centering
     \includegraphics[width=0.6\linewidth]{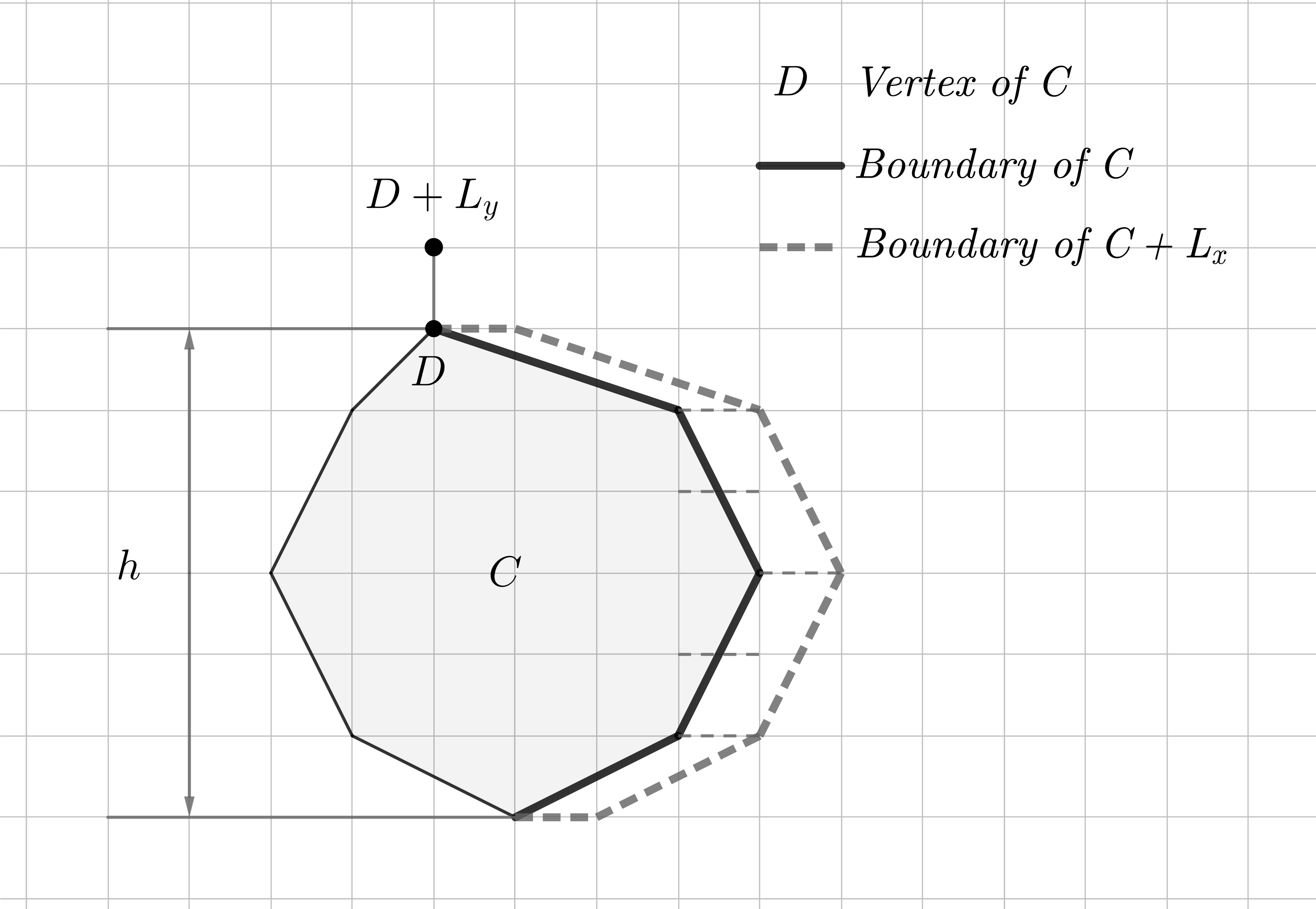}
    \caption{Illustration to the formula $|C+L_{x}|=|C|+h+1$}
\label{f2}

\end{figure}

Now let us analyse the obtained data. Since $l,h\leq 1$, $C=B\ominus A$ is a subset of the unit square, the possible cases are $|C|\in\{0, 1, 2, 3, 4\}$. Below we consider each of the cases.
\begin{enumerate}
    \item $C=\varnothing$, and hence $|B|=3$. This case has been already considered in section \ref{s1}. The only possible body with this property that does not have an inflection point is the standard simplex.

    \item $|C|=1\Rightarrow h=l=0$ then $d\leq 1$.
    \begin{enumerate}
        \item $d=0$ thus $A=B$, and curves supported at this pair are lines that can have only one intersection point,
        \item $d=1$ thus $A=B\cup\{p\}$ and $|A|=4$. We use Pick's formula to estimate the boundaries of $A$. The bodies that satisfy all the conditions are the same subsets of the 2-simplex as were displayed in Figure \ref{f1},

    \end{enumerate}

\item $|C|=2$ then $l\geq1$ or $h\geq1\Rightarrow d=0$. In this case $C$ is one of the following types$\colon\{(0, 0), (1, 0)\}$, $\{(0, 1), (1, 0)\}$, $\{(0, 0), (1, 1)\}$, and $B$ is one of the bodies depicted at Figure \ref{g24}. The last body has an inflection point since it contains $\{(0, 0), (1, 0), (0, 1), (-1, -1)\}$ as a subset,

 \begin{figure}[h!]
  \centering
     \includegraphics[width=0.5\linewidth]{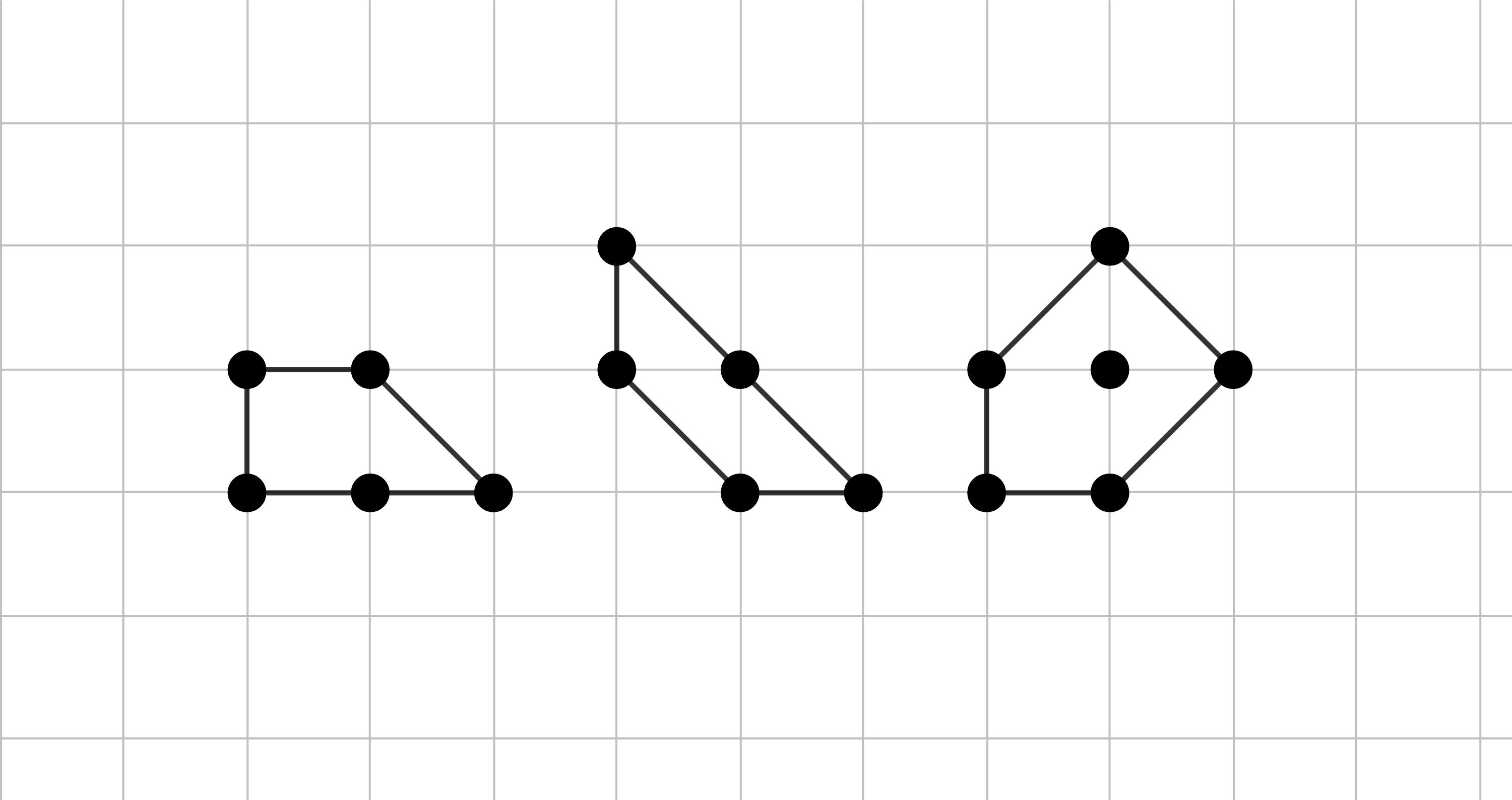}
    \caption{The case $|C|=2$}
\label{g24}
\end{figure}

\item $|C|=3$, $d=0$. In this case $C$ can be one of the following types$\colon$
\begin{multicols}{2}
\begin{itemize}
    \item $C={(0,0),(1,0),(0,1)}$, 
    \item $C={(0,0),(0,1),(1,1)}$,
    \item $C={(1,0),(1,1),(0,1)}$,
    \item $C={(0,0),(1,0),(1,1)}$.
\end{itemize}  
\end{multicols}
In this case all bodies $A+C$ are depicted at Figure \ref{f5}. 
\begin{figure}[h!]
  \centering
     \includegraphics[width=0.6\linewidth]{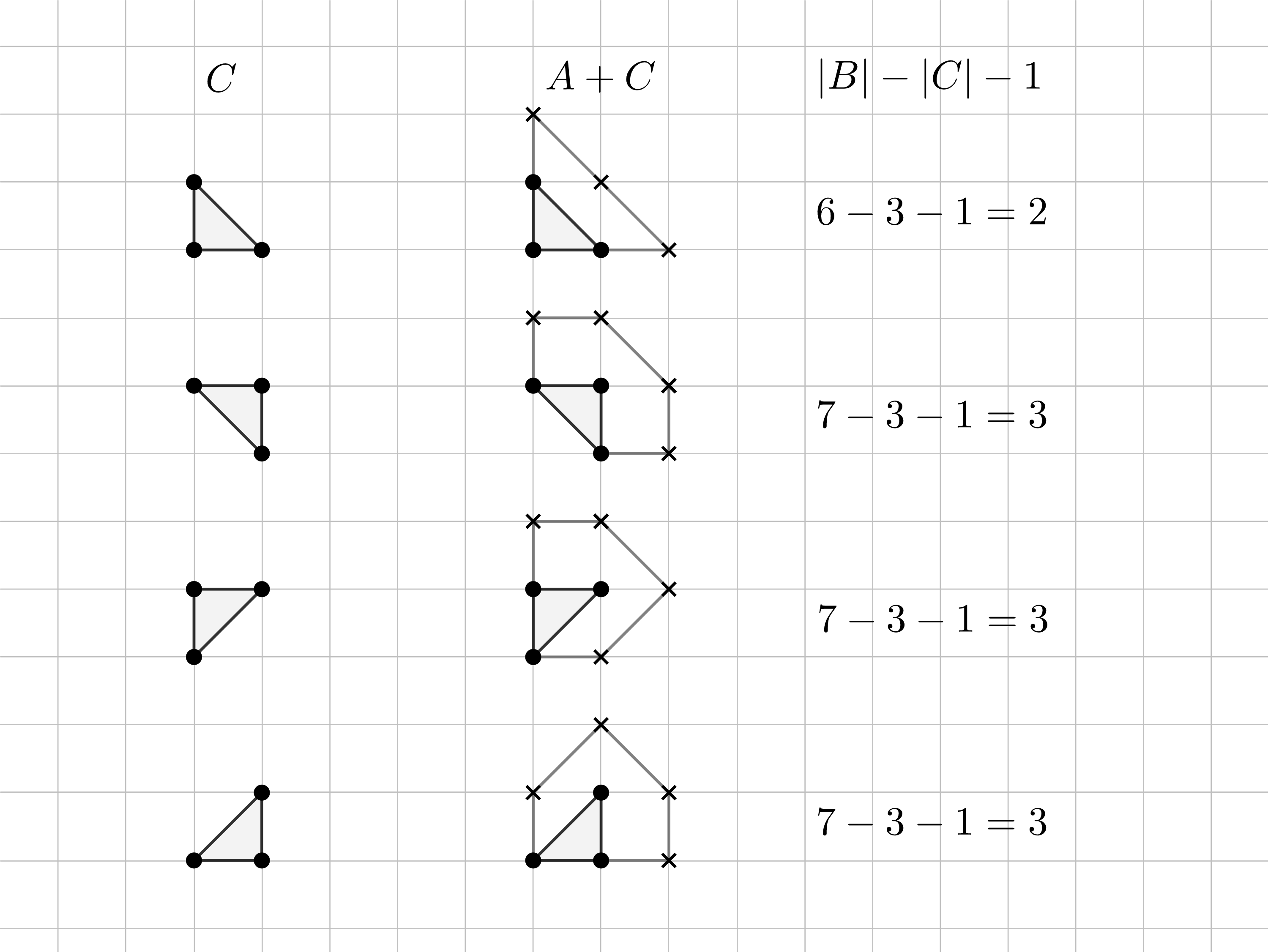}
    \caption{The case $|C|=3$}
\label{f5}
\end{figure}
And all of them except the standard simplex have a root of high multiplicity by Theorem \ref{th}. 
\item $|C|=4$, $C$ is the unit square and $d=0$. Consider $B=A+C$ and apply Theorem \ref{th}. We obtain that this configuration has an inflection point since $|B|-|B\ominus A|-1=|A+C|-|C|-1=8-4-1=3$. 
\end{enumerate}
This concludes the proof of Theorem \ref{th2}.


\section{Acknowledgments}

I would like to express my gratitude to Alexander Esterov for productive discussions and his great support. I would also like to thank Askold Khovanskii for important remarks. I would like to thank Arina Voorhaar for valuable comments and feedback. I am also grateful to reviewers for their deep and comprehensive analysis of the initial manuscript and pointing out some missing cases in Lemma \ref{cl23}.


\end{document}